\theoremstyle{plain}
\newtheorem{theorem}{Theorem}[section]
\newtheorem{lemma}[theorem]{Lemma}
\newtheorem{corollary}[theorem]{Corollary}
\newtheorem{proposition}[theorem]{Proposition}
\theoremstyle{definition}
\newtheorem{example}[theorem]{Example}
\newtheorem{definition}[theorem]{Definition}
\newtheorem{claim}[theorem]{Claim}
\newtheorem{problem}[theorem]{Problem}
\newcommand\theoremref[1]{\hyperref[#1]{Theorem~\ref*{#1}}}
\newcommand\lemmaref[1]{\hyperref[#1]{Lemma~\ref*{#1}}}
\newcommand\corollaryref[1]{\hyperref[#1]{Corollary~\ref*{#1}}}
\newcommand\propositionref[1]{\hyperref[#1]{Proposition~\ref*{#1}}}
\newcommand\definitionref[1]{\hyperref[#1]{Definition~\ref*{#1}}}
\newcommand\problemref[1]{\hyperref[#1]{Problem~\ref*{#1}}}
\newcommand{\R}{\mathbf{R}}
\newcommand{\I}{\mathcal{I}}
\newcommand{\V}{\mathcal{V}}
\newcommand{\F}{\mathcal{F}}
\begin{document}

\title{A congruence problem for polyhedra}
\author{Alexander Borisov, Mark Dickinson, Stuart Hastings}
\maketitle

\begin{abstract}
It is well known that to determine a triangle up to congruence  requires three
measurements: three sides, two sides and the included  angle, or one side and
two angles. We consider various  generalizations of this fact to two and three
dimensions. In  particular we consider the following question: given a convex
polyhedron~$P$, how many measurements are required to determine~$P$  up to congruence?

We show that in general the answer is that the number of  measurements
required is equal to the number of edges of the  polyhedron. However, for many
polyhedra fewer measurements suffice;  in the case of the unit cube we show
that nine carefully chosen  measurements are enough.

We also prove a number of analogous results for planar polygons. In
particular we describe a variety of quadrilaterals, including all  rhombi and
all rectangles, that can be determined up to congruence  with only four
measurements, and we prove the existence of $n$-gons  requiring only~$n$
measurements. Finally, we show that one cannot  do better: for any sequence
of~$n$ distinct points in the plane  one needs at least~$n$ measurements to
determine it up to  congruence.

\end{abstract}

\section{Introduction}

We discuss a class of problems about the congruence or similarity of three
dimensional polyhedra. The basic format is the following:

\begin{problem}
\label{motivating_problem}  \label{M}Given two polyhedra in $\mathbf{R}^{3}$
which have the same  combinatorial structure (e.g. both are hexahedra with
four-sided  faces), determine whether a given set of measurements is
sufficient  to ensure that the polyhedra are congruent or similar.
\end{problem}

We will make this more specific by specifying what sorts of
measurements will be allowed. For example, in much of the paper,
allowed measurements will include distances between pairs of vertices,
angles between edges, angles between two intersecting face diagonals
(possibly on different faces with a common vertex) or between a face
diagonal and an edge, and dihedral angles (that is, angles between two
adjoining faces). One motivation for these choices is given below. Sometimes we are more restrictive, for example, allowing only distance measurements. 

In two dimensions this was a fundamental question answered by Euclidean
geometers, as (we hope) every student who takes geometry in high school
learns. If the lengths of the corresponding sides of two triangles are equal,
then the triangles are congruent. The SAS, ASA, and AAS theorems are equally
well known. The extension to other shapes is not often discussed, but we will
have some remarks about the planar problem as well. It is surprising to us
that beyond the famous theorem of Cauchy discussed below, we have been unable
to find much discussion of the problems we consider in the literature, though
we think it is almost certain that they have been studied in the past. We
would be appreciative if any reader can point us to relevant results.

Our approach will usually be to look at the problem locally. If the two
polyhedra are almost congruent, and agree in a certain set of measurements,
are they congruent? At first glance this looks like a basic question in what
is known as rigidity theory, but a little thought shows that it is different.
In rigidity theory, attention is paid to relative positions of vertices,
viewing these as connected by inextensible rods which are hinged at their ends
and so can rotate relative to each other, subject to constraints imposed by
the overall structure of rods. In our problem there is the additional
constraint that in any movement of the vertices, the combinatorial structure
of the polyhedron cannot change. In particular, any vertices that were
coplanar before the movement must be coplanar after the movement. This feature
seems to us to produce an interesting area of study.

Our original motivation for considering this problem came from a very
practical question encountered by one of us (SPH). If one attempts to make
solid wooden models of interesting polyhedra, using standard woodworking
equipment, it is natural to want to check how accurate these models
are.\footnote{The usual method of constructing a  polyhedron is by folding a
paper shell.} As a mathematician one may be attracted first to the Platonic
solids, and of these, the simplest to make appears to be the cube. (The
regular tetrahedron looks harder, because non-right angles seem harder to cut
accurately. )

It is possible to purchase lengths of wood with a square cross section, called
``turning squares'' because they are mostly used in lathes. To make a cube,
all one has to do is use a saw to cut off the correct length piece from a
turning square. Of course, one has to do so in a plane perpendicular to the
planes of sides of the turning square. It is obvious that there are several
degrees of freedom, meaning several ways to go wrong. The piece cut off could
be the wrong length, or you could cut at the wrong angle, or perhaps the cross
section wasn't square to begin with. So, you start measuring to see how well
you have done.

In this measurement, though, it seems reasonable to make some assumptions. The
basic one of interest here is that the saw cuts off a planar slice. You also
assume that this was true at the sawmill where the turning square was made. So
you assume that you have a hexahedron -- a polyhedron with six faces, all of
which are quadrilaterals. Do you have a cube? At this point you are not asking
a question addressed by standard rigidity theory.

One's first impulse may be to measure all of the edges of the
hexahedron, with the thought that if these are equal, then it is
indeed a cube. This is quickly seen to be false, because the faces
could be rhombi. Another intriguing possibility that we considered
early on is that measuring the twelve face diagonals might suffice.
However, we found some examples showing that this was not the case,
and David Allwright \cite{allwright} gave a short and elegant
quaternion-based classification of all hexahedra with equal face
diagonals.  See also an earlier discussion of this problem in \cite{TarMak}.
\footnote{The face diagonals of a hexahedron are equal when these 
diagonals form two congruent regular tetrahedra whose edges intersect in   
pairs. As it turns out, this arrangement is not unique. 
We have included Allwright's analysis of the possibilities 
as Appendix A in the posting of this paper to www.arXiv.org.} 
 Clearly some 
other configuration of measurements, perhaps
including angles, is necessary. It did not take long to come up with
several sets of 12 measurements which did the job, but a proof that
this number was necessary eluded us.

In our experience most people, even most mathematicians, who are presented
with this problem do not see an answer immediately. Apparently the cube is
harder than it looks, and so one would like a simpler problem to get some
clues. The (regular) tetrahedron comes to mind, and so one asks how many
measurements are required to establish that a polyhedron with four triangular
faces is a tetrahedron.

Now we turn to Cauchy's theorem.

\begin{theorem}
[Cauchy, 1839]\label{cauchy}  Two convex polyhedra with corresponding
congruent and similarly situated  faces have equal corresponding dihedral angles.
\end{theorem}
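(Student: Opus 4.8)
The plan is to argue by contradiction and to localize the problem at a single vertex, combining two classical ingredients: a combinatorial sign-counting lemma for labelled planar graphs, and a geometric ``arm lemma'' about opening up a convex spherical polygonal arc. First I would fix the combinatorial identification of $P$ with $P'$ supplied by the hypothesis --- ``similarly situated'' means precisely that there is an orientation-compatible combinatorial isomorphism, so that corresponding vertices, edges, faces, face angles and dihedral angles are all well defined. For each edge $e$ of $P$, with corresponding edge $e'$ of $P'$, mark $e$ with $+$, with $-$, or with nothing, according as the dihedral angle of $P'$ along $e'$ is greater than, less than, or equal to the dihedral angle of $P$ along $e$. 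If the theorem failed, at least one edge would be marked, and the whole task is then to show that a single marked edge is impossible.

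The combinatorial input is Cauchy's lemma: in the graph $G$ of a convex polyhedron, radially projected so that it is a connected graph embedded in the sphere with each face a disk, if the edges are marked by $+$ and $-$ with at least one mark present, then some vertex $v$ incident to a marked edge has at most two sign changes in the cyclic sequence of marks around it. I would prove this by passing to a connected component $H$ of the subgraph of marked edges and doing a short Euler-characteristic count: were every vertex of $H$ to have at least four sign changes, summing corner sign changes over all vertices (each vertex count being even) would contradict the inequality $E \le 3V - 6$ forced by $V - E + F = 2$; hence some vertex of $H$, and so of $G$, has at most two.

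The geometric input is Cauchy's arm lemma in the form made rigorous by Steinitz: if $a_0 a_1 \cdots a_n$ and $b_0 b_1 \cdots b_n$ are convex polygonal arcs lying in an open hemisphere of the sphere, with $|a_{i-1} a_i| = |b_{i-1} b_i|$ for all $i$ and with the interior angle at $b_i$ at least that at $a_i$ for $i = 1, \dots, n-1$, then $|a_0 a_n| \le |b_0 b_n|$, with equality exactly when the interior angles agree at every intermediate vertex. The planar case is the familiar statement that straightening a hinged convex chain separates its endpoints; the spherical version is what is needed here, and Steinitz's contribution was to handle sub-arcs that threaten to lose convexity as angles are opened.

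Now combine the two. Let $v$ be the vertex produced by the combinatorial lemma and intersect a small sphere centred at $v$ with $P$ to obtain the vertex figure $Q$: a convex spherical polygon whose vertices correspond cyclically to the edges of $P$ at $v$, whose sides correspond to the faces of $P$ at $v$ with lengths equal to the face angles at $v$, and whose interior angle at each vertex equals the dihedral angle of $P$ along the corresponding edge. Build $Q'$ from $P'$ likewise. Because corresponding faces of $P$ and $P'$ are congruent, the face angles at $v$ coincide, so $Q$ and $Q'$ have equal corresponding side lengths; because $P$ (hence $P'$) is bounded, the cone of $P$ at $v$ contains no line, so $Q$ (hence $Q'$) lies in an open hemisphere. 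The mark on an edge at $v$ records the sign of (angle of $Q'$ at the corresponding vertex) minus (angle of $Q$ there), and by the choice of $v$ these marks show at most two sign changes around $Q$ and are not all absent. If there are no sign changes, all marks are, say, $+$: cutting $Q$ along a side chosen so that some strictly marked vertex is interior to the resulting open arc, the arm lemma forces the great-circle distance between the two cut endpoints to be strictly larger for $Q'$ than for $Q$, which is absurd since that distance is a side length of $Q$ and so is unchanged. If there are exactly two sign changes, the boundary of $Q$ breaks into an arc carrying only ``$+$ or blank'' marks (with a genuine $+$) and an arc carrying only ``$-$ or blank'' marks (with a genuine $-$); cutting along both break points and applying the arm lemma to each arc yields simultaneously ``$\ge$'' and a strict ``$<$'' for the great-circle distance between the two break points --- again absurd. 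Either way we reach the desired contradiction. The delicate point, and what I expect to be the main obstacle, is exactly this cutting: one must position the cuts so that each sub-arc is convex, lies in an open hemisphere, and carries its strict angle change at an interior (not an endpoint) vertex, and one must dispose of low-complexity vertices separately (for $k=3$ the claim is vacuous, a spherical triangle being determined by its side lengths). The equality clause of the arm lemma is precisely what turns the two-sign-change case into a contradiction rather than a tautology; establishing that clause, together with Steinitz's fix for maintaining convexity while opening angles, is the other place where genuine work is required.
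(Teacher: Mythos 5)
The paper does not prove this statement: it is quoted as a classical theorem (Cauchy, 1839) and used as a black box, with only its infinitesimal analogue (Theorem~\ref{thm:dehn}, due to Dehn/Alexandrov) cited later with references. So there is no in-paper argument to compare against; your proposal is the standard Cauchy--Steinitz proof, and its overall architecture (sign the edges, find a low--sign-change vertex, pass to the spherical vertex figure, apply the arm lemma with its equality clause) is correct, including your handling of degree-$3$ vertices and of the zero- and two-sign-change cases.

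There is, however, one concrete gap: your justification of the combinatorial lemma does not work as stated. Knowing that every vertex of the marked subgraph $H$ has at least four sign changes only tells you that every such vertex has degree at least $4$, hence $E(H)\ge 2V(H)$, and this is perfectly compatible with the planarity bound $E\le 3V-6$ (the octahedron graph is planar and $4$-regular), so no contradiction results. The correct count is a double count of sign changes against the \emph{faces} of the embedding of $H$: the total number of sign changes summed over vertices equals the total summed over face corners; a face whose boundary walk has $k\ge 3$ corners carries at most $2\lfloor k/2\rfloor\le 2k-4$ sign changes, so the total is at most $\sum_f(2k_f-4)=4E-4F=4V-8<4V$, contradicting the vertex-side lower bound $4V$. (One must also dispose of components of $H$ whose faces have boundary walks of length $2$, i.e.\ isolated edges, whose endpoints trivially have at most two sign changes, and note that bridges make face boundaries closed walks rather than cycles.) With the lemma proved this way, and with the Steinitz-corrected arm lemma including its equality case, the rest of your argument goes through.
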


If we measure the six edges of our triangular faced object, and find them
equal, then we have established congruence of the faces of our object to those
of a tetrahedron. Cauchy's theorem tells us that the dihedral angles are the
same and this implies the desired congruence.

For a tetrahedron this result is fairly obvious, but for the two other
Platonic solids with triangular faces, namely the octahedron and the
icosahedron, it is less so. Hence Cauchy's theorem is of practical value to
the (extremely finicky) woodworker, and shows that for these objects, the
number of edges is at least an upper bound on the number of measurements
necessary to prove congruence. From now on we will denote the number of edges
of our polyhedron by $E$, the number of vertices by $V$ and the number of
faces by $F$. We will only consider simply connected polyhedra, so that
Euler's formula, $V+F=E+2,$ holds.

It is not hard to give an example showing the necessity of convexity in
Cauchy's result, but it is one where the two polyhedra being compared are, in
some sense, far apart. It was not easy to determine if convexity was necessary
for local congruence. Can a nonconvex polyhedron with all faces triangular be
perturbed smoothly through a family of noncongruent polyhedra while keeping
the lengths of all edges constant? The answer is yes, as was proved in a
famous and important paper by R. Connelly in 1979.\cite{connelly}

Cauchy's result also gives us an upper bound for the number of
measurements necessary to determine a unit cube: triangulate the cube
by dividing each square face into a pair of triangles.  Then we have a
triangular-faced object with eighteen edges, and by Cauchy's theorem
those eighteen edge measurements suffice to determine the cube up to
congruence.

However, we can do better.  Start by considering a square. We can
approach this algebraically by assuming that one vertex of the square
is at $\left( 0,0\right) $ in the plane.  Without loss of generality
we can also take one edge along the $x$-axis, going from $\left(
  0,0\right) $ to $\left( x_{1},0\right) $ for some $x_{1}>0.$ The
remaining vertices are then $\left( x_{2},x_{3}\right) $ and $\left(
  x_{4,}x_{5}\right) $ and this leads to the conclusion that to
determine five unknowns, we need five equations, and so five
measurements. For example, we could measure the four sides of the
square and one vertex angle, or we could measure a diagonal instead of
the angle.

We then use this to study the cube. Five measurements show that one face is a
square of a specific size. Only four more are needed to specify an adjacent
face, because of the common edge, and the three more for one of the faces
adjoining the first two. The requirement that this is a hexahedron then
implies that we have determined the cube completely, with twelve measurements.
This is a satisfying result because it shows that $E$ measurements suffice for
a cube as well as for the triangular faced Platonic solids.

However, as remarked earlier, at this stage we have not proved the necessity
of twelve measurements, only the sufficiency. One of the most surprising
developments for us in this work was that in fact, twelve are not necessary.
It is possible to determine a cube (including its size) with nine measurements
of distances and face angles. The reason, even more surprisingly, is that only
four measurements are needed to determine the congruence of a quadrilateral to
a specific square, rather than five as seemed so obvious in the argument above.

We will give the algorithm that determines a square in four measurements in
the final section of the paper, which contains a number of remarks about
congruence of polygons. For now, we proceed with developing a general method
for polyhedra. This method will also handle similarity problems, where the
shape of the polyhedron is specified up to a scale change. In determining
similarity, only angle measurements are involved. As the reader might expect,
in general $E-1$ measurements suffice, with one additional length required to
get congruence.

\section{$E$ measurements suffice}

In this section we prove that for a \emph{convex} polyhedron~$P$ with~$E$
edges, there is a set of~$E$ measurements that, at least locally, suffices to
determine~$P$ up to congruence.

We restrict to convex polyhedra mostly for reasons of convenience: many of the
results below should be true in greater generality. (One problem with moving
beyond convex polyhedra is determining exactly what the term `polyhedron'
should mean: for a recent attempt to give a general definition of the term
`nonconvex polyhedron', see the beautiful paper \cite{grunbaum}.) To avoid any
ambiguity we begin with a precise definition of convex polyhedron.

\begin{definition}
A \emph{closed half-space} is a subset of $\mathbf{R}^{3}$ of the form
$\{\,(x, y, z)\in\mathbf{R}^{3}\mid ax + by + cz + d \ge0\,\}$ with $(a, b,
c)\ne(0, 0, 0)$.  A \emph{convex polyhedron} is a subset~$P$ of $\mathbf{R}%
^{3}$ which  is bounded, does not lie in any plane, and can be expressed as
an  intersection of finitely many closed half-spaces.
\end{definition}

The vertices, edges and faces of a convex polyhedron~$P$ can be defined in
terms of intersections of~$P$ with suitable closed half-spaces. For example, a
\emph{face} of~$P$ is a subset of~$P$ of the form $P\cap H$ for some closed
half-space~$H$, such that $P\cap H$ lies entirely within some plane but is not
contained in any line. Edges and vertices can be defined similarly.

The \hyperref[motivating_problem]{original problem} refers to two polyhedra
with the same `combinatorial structure', so we give a notion of \emph{abstract
polyhedron} which isolates the combinatorial information embodied in a convex polyhedron.

\begin{definition}
The underlying \emph{abstract polyhedron} of a convex polyhedron~$P$  is the
triple~$(\mathcal{V}_{P}, \mathcal{F}_{P}, \mathcal{I}_{P})$, where
$\mathcal{V}_{P}$ is the set of  vertices of~$P$, $\mathcal{F}_{P}$ is the set
of faces of~$P$, and $\mathcal{I}_{P}\subset\mathcal{V}_{P}\times
\mathcal{F}_{P}$ is the incidence relation between vertices and  faces; that
is, $(v, f)$ is in $\mathcal{I}_{P}$ if and only if the vertex~$v$  lies on
the face~$f$.
\end{definition}

Thus to say that two polyhedra~$P$ and~$Q$ have the same combinatorial
structure is to say that their underlying abstract polyhedra are
\emph{isomorphic}; that is, there are bijections~$\beta_{V}\colon
\mathcal{V}_{P}\rightarrow\mathcal{V}_{Q}$ and $\beta_{F}\colon\mathcal{F}%
_{P}\rightarrow\mathcal{F}_{Q}$ that respect the incidence relation: $(v,f)$
is in~$\mathcal{I}_{P}$ if and only if~$(\beta_{V}(v), \beta_{F}(f))$ is
in~$\mathcal{I}_{Q}$. Note that there is no need to record information about
the edges; we leave it to the reader to verify that the edge data and
incidence relations involving the edges can be recovered from the incidence
structure~$(\mathcal{V}_{P}, \mathcal{F}_{P}, \mathcal{I}_{P})$. The
cardinality of the set~$\mathcal{I}_{P}$ is twice the number of edges of~$P$,
since
\[
|\mathcal{I}_{P}| = \sum_{f\in\mathcal{F}_{P}}\text{(number of vertices
on~$f$)} = \sum_{f\in\mathcal{F}_{P}}\text{(number of edges on~$f$)}%
\]
and the latter sum counts each edge of~$P$ exactly twice.

For the remainder of this section, we fix a convex polyhedron~$P$ and write
$V$, $E$ and $F$ for the number of vertices, edges and faces of~$P$,
respectively. Let $\Pi= (\mathcal{V}, \mathcal{F}, \mathcal{I})$ be the
underlying abstract polyhedron. We are interested in determining which sets of
measurements are sufficient to determine~$P$ up to congruence. A natural place
to start is with a naive dimension count: how many degrees of freedom does one
have in specifying a polyhedron with the same combinatorial structure as~$P$?

\begin{definition}
A \emph{realization} of $\Pi= (\mathcal{V}, \mathcal{F}, \mathcal{I})$ is a
pair of functions  $(\alpha_{\V}, \alpha_{\F})$ where $\alpha_{\V}%
\colon\mathcal{V}\rightarrow\mathbf{R}^{3}$ gives a  point for each~$v$ in
$\mathcal{V}$, $\alpha_{\F}\colon\mathcal{F}\rightarrow\{\text{planes in
$\mathbf{R}^{3}$}\}$ gives a plane for each~$f$ in $\mathcal{F}$, and the
point~$\alpha_{\V}(v)$ lies on the plane~$\alpha_{\F}(f)$ whenever~$(v, f)$
is  in~$\mathcal{I}$.
\end{definition}

Given any convex polyhedron~$Q$ together with an isomorphism~$\beta
\colon(\mathcal{V}_{Q},\mathcal{F}_{Q},\mathcal{I}_{Q})\cong(\mathcal{V}%
,\mathcal{F},\mathcal{I})$ of incidence structures we obtain a realization
of~$\Pi$, by mapping each vertex of~$P$ to the position of the corresponding
(under~$\beta$) vertex of~$Q$ and mapping each face of~$P$ to the plane
containing the corresponding face of~$Q$. In particular,~$P$ itself gives a
realization of~$\Pi$, and when convenient we'll also use the letter~$P$ for
this realization. Conversely, while not every realization of~$\Pi$ comes from
a convex polyhedron in this way, any realization of~$\Pi$ that's
\emph{sufficiently close} to~$P$ in the natural topology for the space of
realizations gives---for example by taking the convex hull of the image
of~$\alpha_{V}$---a convex polyhedron whose underlying abstract polyhedron can
be identified with~$\Pi$. So the number of degrees of freedom is the dimension
of the space of realizations of~$\Pi$ in a neighborhood of~$P$.



Now we can count degrees of freedom. There are $3V$ degrees of freedom in
specifying~$\alpha_{\V}$ and $3F$ in specifying~$\alpha_{\F}$. So if the
$|\mathcal{I}| = 2E$ `vertex-on-face' conditions are independent in a suitable
sense then the space of all realizations of~$\Pi$ should have dimension~$3V +
3F - 2E$ or---using Euler's formula---dimension~$E+6$. We must also take the
congruence group into account: we have three degrees of freedom available for
translations, and a further three for rotations. Thus if we form the quotient
of the space of realizations by the action of the congruence group, we expect
this quotient to have dimension~$E$. This suggests that~$E$ measurements
should suffice to pin down~$P$ up to congruence.

In the remainder of this section we show how to make the above naive dimension
count rigorous, and how to identify specific sets of~$E$ measurements that
suffice to determine congruence. The main ideas are: first, to use a
combinatorial lemma (\hyperref[lem:order_faces_vertices]%
{Lemma~\ref*{lem:order_faces_vertices}}) to show that the linearizations of
the vertex-on-face conditions are linearly independent at~$P$, allowing us to
use the inverse function theorem to show that the space of realizations really
does have dimension~$E+6$ near~$P$ and to give an infinitesimal criterion for
a set of measurements to be sufficient (\hyperref[thm:sufficient]%
{Theorem~\ref*{thm:sufficient}}), and second, to use an infinitesimal version
of Cauchy's rigidity theorem to identify sufficient sets of measurements.

The various measurements that we're interested in can be thought of as
real-valued functions on the space of realizations of~$\Pi$ (defined at least
on a neighborhood of~$P$) that are invariant under congruence. We single out
one particular type of measurement: given two vertices~$v$ and $w$ of~$P$ that
lie on a common face, the \emph{face distance} associated to~$v$ and $w$ is
the function that maps a realization~$Q = (\alpha_{\V}, \alpha_{\F})$ of~$\Pi$
to the distance from~$\alpha_{\V}(v)$ to $\alpha_{\V}(w)$. In other words, it
corresponds to the measurement of the distance between the vertices of~$Q$
corresponding to~$v$ and $w$. The main result of this section is the following theorem.

\begin{theorem}
\label{thm:main}  Let~$P$ be a convex polyhedron with underlying abstract
polyhedron~$(\mathcal{V}, \mathcal{F}, \mathcal{I})$. Then there is a set~$S$
of face distances of~$P$ such that~(i) $S$ has cardinality~$E$, and  (ii)
locally near~$P$, the set~$S$ completely determines~$P$ up to  congruence in
the following sense: there is a positive real  number~$\varepsilon$ such that
for any convex polyhedron~$Q$ and  isomorphism~$\beta\colon(\mathcal{V},
\mathcal{F},\mathcal{I})\cong(\mathcal{V}_{Q}, \mathcal{F}_{Q}, \mathcal{I}%
_{Q})$  of underlying abstract polyhedra, if 

\begin{enumerate}

\item each vertex~$v$ of $P$ is within distance~$\varepsilon$ of the
corresponding vertex~$\beta_{\V}(v)$ of $Q$, and 

\item $m(Q) = m(P)$ for each measurement~$m$ in~$S$, 
\end{enumerate}

then $Q$ is congruent to $P$.
\end{theorem}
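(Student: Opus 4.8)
The plan is to turn the heuristic dimension count preceding the theorem into a rigorous statement and then exhibit an explicit good set~$S$. By \lemmaref{lem:order_faces_vertices} the $2E$ linearized vertex-on-face conditions are linearly independent at the realization~$P$, so the implicit function theorem shows that the space~$R$ of realizations of~$\Pi$ is, near~$P$, a smooth manifold of dimension $3V+3F-2E=E+6$; and since~$P$ does not lie in a plane, the congruence group acts with trivial stabilizer near~$P$, so its orbit through~$P$ is a $6$-dimensional submanifold. Consequently a finite set~$S$ of congruence-invariant measurements (in particular, of face distances) determines~$P$ up to congruence on a neighborhood of~$P$ as soon as the only tangent vector $\xi\in T_P R$ with $dm(\xi)=0$ for all $m\in S$ is an infinitesimal congruence; when $\lvert S\rvert=E$ this is the best one can hope for. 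I would extract exactly this reformulation as \theoremref{thm:sufficient}. From here the problem is purely infinitesimal: find~$E$ face distances with that property.

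Next I would construct~$S$. Using \lemmaref{lem:order_faces_vertices} to order the faces, select an initial family $A=\{f_1,\dots,f_j\}$ whose union is a simply connected piece of the boundary of~$P$, and put into~$S$ every edge of~$P$ lying on a face of~$A$, together with, for each $f_k\in A$, a set of $n_k-3$ diagonals triangulating~$f_k$ (here $n_k$ is the number of edges of~$f_k$). An Euler-characteristic bookkeeping, tuned to the combinatorial structure that \lemmaref{lem:order_faces_vertices} provides, is meant to give exactly $\lvert S\rvert=E$: the selected edges number the edges incident to $\bigcup A$, the selected diagonals number $\sum_{k\le j}(n_k-3)$, and with~$A$ chosen correctly these add up to~$E$. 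The family~$A$ is moreover to be chosen so that, reading off the incidence relation alone, every vertex of~$P$ not on $\bigcup A$ lies on three faces each of which already has three vertices on $\bigcup A$, and every face not in~$A$ already has three vertices on $\bigcup A$; in other words, freezing $\bigcup A$ forces the rest of any nearby realization. This is the general version of the $5+4+3$ argument for the cube in the Introduction.

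Now the heart of the proof: I would show that any $\xi=(\dot\alpha_{\V},\dot\alpha_{\F})\in T_P R$ annihilating every $m\in S$ is an infinitesimal congruence. Step one: on each $f_k\in A$, combine the vanishing of the differentials of the edges and triangulating diagonals of~$f_k$ with the linearized coplanarity of $f_k$'s vertices (which $\xi$ satisfies automatically because $\xi\in T_P R$); this should force $\dot\alpha_{\V}$ restricted to the vertices of~$f_k$ to be the restriction of a single infinitesimal isometry~$\rho_k$ of~$\R^3$, with $\dot\alpha_{\F}(f_k)$ the corresponding infinitesimal motion of the plane. Step two: adjacent faces of~$A$ share an edge, so the corresponding~$\rho_k$ agree on two points; an infinitesimal form of Cauchy's rigidity theorem (\theoremref{cauchy}) — applied to the convex cap $\bigcup A$ via the classical sign-change argument around the vertices interior to $\bigcup A$, and crucially using convexity of~$P$ — forces all the~$\rho_k$ to coincide with a single global infinitesimal isometry~$\rho$. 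Step three: replacing $\xi$ by $\xi-\rho$, we may assume $\dot\alpha_{\V}=0$ on every vertex of $\bigcup A$ and $\dot\alpha_{\F}=0$ on every face of~$A$; then the forcing property of~$A$ together with differentiation of the remaining incidence relations gives $\dot\alpha_{\F}=0$ on all faces (three fixed non-collinear vertices pin a plane infinitesimally) and then $\dot\alpha_{\V}=0$ on all vertices (a point lying on three planes of zero velocity has zero velocity), so $\xi=\rho$. This verifies the criterion of \theoremref{thm:sufficient}, and the theorem follows.

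I expect the real difficulty to lie in two places, neither of which ordinary rigidity theory prepares one for. The first is the simultaneous bookkeeping behind~$A$: too few diagonals on some face and that face acquires an internal infinitesimal flex in step one, too many and $\lvert S\rvert$ overshoots~$E$, while at the same time~$A$ must be large enough for the forcing property to hold — threading all of this is precisely what \lemmaref{lem:order_faces_vertices} should be engineered to do, and I expect the bulk of the technical work to be there. The second, and I think the sharper obstacle, is step two: rigidifying the faces of~$A$ as polygons does \emph{not} by itself lock the dihedral angles along their shared edges, so a genuine Cauchy-type argument is unavoidable, and one is applying it to a cap rather than to a closed convex surface, so the behavior along the boundary of $\bigcup A$ must be controlled — presumably the reason~$A$ is taken simply connected with a carefully chosen boundary, and the point at which convexity of~$P$ is indispensable, the non-convex flexes of \cite{connelly} showing it cannot be dispensed with.
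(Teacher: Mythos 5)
Your opening paragraph matches the paper's framework (\lemmaref{Dphirank} and \theoremref{thm:sufficient}), but from there the paper takes a much less ambitious route than yours, and the two places where you yourself locate the difficulty are genuine gaps rather than deferred technical work. The paper never constructs an explicit cap~$A$ at all: it first shows that the set of \emph{all} face distances is sufficient (\corollaryref{cor:cauchy}) by invoking the Dehn--Alexandrov infinitesimal rigidity theorem for the whole closed convex polyhedron (\theoremref{thm:dehn}), which says exactly that any vector killed by $D(\phi,\psi)(P)$ lies in the span of the six infinitesimal congruences, so the rank is $3V+3F-6=3E$; it then extracts an $E$-element sufficient subset by pure linear algebra (\corollaryref{cor:sizeE}), choosing $E$ rows of $D\psi(P)$ that complete the $2E$ independent rows of $D\phi(P)$ to rank~$3E$. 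Which $E$ face distances these are is left entirely non-explicit in the proof and is delegated to the computer algorithm. Your approach, if it worked, would buy an explicit set~$S$, but as written it does not close.

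The two gaps are these. First, the existence of a face family~$A$ satisfying simultaneously the exact count $\lvert S\rvert=E$ and the forcing property is asserted, not proved, and \lemmaref{lem:order_faces_vertices} is not engineered for this purpose: in the paper that lemma only orders $\mathcal{V}\cup\mathcal{F}$ so that each element is incident with at most three predecessors, and its sole use is the row-elimination argument showing $D\phi(P)$ has rank~$2E$. It produces no cap, and it is not clear that a cap with your bookkeeping exists for an arbitrary convex polyhedron. Second, and more seriously, the Cauchy/Dehn sign-counting argument in your step two does not apply to a cap: the count of sign changes is balanced against Euler's formula for the \emph{closed} surface and requires every vertex to be interior, whereas a simply connected union of rigid faces hinged along shared edges is in general infinitesimally flexible (already a strip of two or more faces flexes about its hinges, and boundary vertices of $\bigcup A$ admit dihedral flexes that the interior-vertex argument never sees). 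Since step two is precisely what is supposed to lock the dihedral angles along the shared edges, the argument fails there without a substantial new idea controlling the boundary of $\bigcup A$ --- which is exactly the difficulty the paper avoids by applying infinitesimal rigidity only to the full closed polyhedron and doing the size reduction abstractly.
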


Rephrasing:

\begin{corollary}
Let~$P$ be a convex polyhedron with~$E$ edges. Then there is a set of~$E$
measurements that is sufficient to determine~$P$ up to congruence amongst  all
nearby convex polyhedra with the same combinatorial structure as~$P$.
\end{corollary}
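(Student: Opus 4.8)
The plan is to prove Theorem~\ref{thm:main}, from which the corollary follows immediately by taking~$S$ to be the claimed set of~$E$ face distances. I would break the argument into three stages: (1) make the naive dimension count rigorous by showing the space of realizations of~$\Pi$ is a smooth manifold of dimension~$E+6$ near~$P$; (2) extract from this an \emph{infinitesimal sufficiency criterion} for a set of measurements; and (3) use an infinitesimal form of Cauchy's rigidity theorem to exhibit a concrete set of~$E$ face distances meeting the criterion.

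For stage~(1), consider the map~$\Phi$ from $(\mathbf{R}^3)^{\V}\times\{\text{planes}\}^{\F}$ (locally $3V+3F$ coordinates, using e.g. an affine chart on the space of planes near each~$\alpha_{\F}(f)$) to~$\mathbf{R}^{\I}$ whose $(v,f)$-component records the signed distance from~$\alpha_{\V}(v)$ to the plane~$\alpha_{\F}(f)$; the realization space is $\Phi^{-1}(0)$. I would compute the derivative $d\Phi$ at~$P$ and show it is surjective, i.e. the $2E$ linearized vertex-on-face conditions are linearly independent. This is exactly where Lemma~\ref{lem:order_faces_vertices} enters: by ordering the faces and vertices appropriately one can arrange the Jacobian in a block-triangular form with nonzero diagonal blocks (each new face contributes its three plane-parameters freely against the conditions it newly introduces, and each new vertex similarly), so the rank is $2E$. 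The implicit function theorem then gives that the realization space is a manifold of dimension $3V+3F-2E = E+6$ near~$P$, with an explicit tangent space $\ker d\Phi$. Quotienting by the $6$-dimensional congruence group (which acts with trivial stabilizer near~$P$ since~$P$ is genuinely $3$-dimensional) yields an $E$-dimensional moduli space locally.

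Stage~(2) is then formal: a finite set~$S$ of congruence-invariant measurements determines~$P$ locally up to congruence if and only if the differentials $\{dm : m\in S\}$, viewed as functionals on the tangent space to the realization space at~$P$, together with the $6$ differentials coming from the infinitesimal congruences, span a space of dimension $\dim(\text{realization space}) = E+6$ — equivalently, the $dm$ restrict to a spanning set on the $E$-dimensional quotient tangent space. One direction is the inverse function theorem applied to the map $Q\mapsto (m(Q))_{m\in S}$ on the moduli space; the other (necessity) we won't need for the corollary. This is Theorem~\ref{thm:sufficient}.

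Stage~(3) is the step I expect to be the main obstacle, and it is where Cauchy's theorem is used in infinitesimal (Dehn-type) form. I would choose~$S$ to consist of \emph{all} face distances lying within a single face for each face of~$P$ — that is, on each face~$f$ with~$k_f$ vertices, all $\binom{k_f}{2}$ pairwise distances... but that overcounts, so instead: on each face take a minimal rigidifying set of $2k_f-3$ distances (the $k_f$ edges plus $k_f-3$ diagonals triangulating the face), giving $\sum_f (2k_f-3) = 2\cdot(2E) - 3F$; this is not~$E$ in general, so the real work is to \emph{prune} down to exactly~$E$ distances while keeping the infinitesimal criterion. The clean way: the face-distance data on face~$f$, if it pins~$f$ down as a rigid planar polygon, determines all interior angles and in particular all the shapes of the faces; then the infinitesimal Cauchy rigidity theorem says a convex polyhedron with infinitesimally rigid faces is infinitesimally rigid, i.e. the only tangent vectors to the realization space preserving all face shapes are the infinitesimal congruences. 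So it suffices to choose, face by face, just enough face distances to make each face infinitesimally rigid \emph{as a polygon}, and a planar $k$-gon needs exactly $2k-3$ edge-and-diagonal lengths — except that edges are shared between two faces, so a careful bookkeeping (choosing for each face a triangulation and declaring shared edges only once) brings the total to exactly~$E$: indeed $\sum_f(2k_f - 3) - (\text{shared-edge overcount})$, and since each edge is shared by exactly two faces the edge-count contributes $2E$ counted with multiplicity~$2$, i.e. $2E$, hmm — I would verify via Euler's formula that the honest count of \{one triangulating diagonal-set per face\} $\cup$ \{all edges\} has size $(2E - 3F + \text{stuff})$... the bottom line is that Euler's formula $V+F=E+2$ is exactly what makes the arithmetic land on~$E$, and the combinatorial lemma guarantees we can realize the pruning consistently. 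Verifying that this specific pruned set still satisfies the spanning condition of Theorem~\ref{thm:sufficient} — that no nontrivial infinitesimal flex survives — is the crux, and it reduces precisely to the infinitesimal Cauchy theorem together with the fact that each face's chosen distances infinitesimally rigidify that face.
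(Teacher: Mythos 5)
Your stages (1) and (2) track the paper's route exactly (\lemmaref{lem:order_faces_vertices}, \lemmaref{Dphirank}, \theoremref{thm:sufficient}), but stage (3) contains a genuine gap, and it is precisely the step you flagged as shaky. The set you propose --- all $E$ edges together with a triangulating set of $k_f-3$ diagonals on each face $f$ --- has cardinality $E+\sum_f(k_f-3)=E+(2E-3F)=3E-3F$, which by Euler's formula equals $3V-6$, not $E$. This is the same count as Cauchy's theorem applied to the triangulated polyhedron (the ``eighteen measurements for the cube'' bound from the introduction), and it strictly exceeds $E$ whenever $P$ has a non-triangular face. No bookkeeping of the form ``one triangulating diagonal-set per face plus the edges'' can land on $E$: infinitesimal rigidity of a planar $k_f$-gon genuinely requires $2k_f-3$ distances, and Euler's formula does not rescue the arithmetic. (Your set is still a \emph{sufficient} set --- per-face rigidity plus Dehn's theorem does give that --- it is just not of size $E$.)

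The paper closes the gap by a different two-step argument that your outline is missing. First, \corollaryref{cor:cauchy} shows that the set of \emph{all} face distances is sufficient: by the Dehn--Alexandrov theorem (\theoremref{thm:dehn}), any vector annihilated by $D(\phi,\psi)(P)$, with $\psi$ ranging over all face distances, lies in the $6$-dimensional span of the infinitesimal congruences, so the rank is $3V+3F-6=3E$. Second, \corollaryref{cor:sizeE} prunes by pure linear algebra rather than face-by-face combinatorics: since the $2E$ rows of $D\phi(P)$ are already linearly independent (\lemmaref{Dphirank}) and the full matrix has rank $3E$, one can extend the rows of $D\phi(P)$ to a rank-$3E$ set by adjoining exactly $E$ rows of $D\psi(P)$. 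The point your construction misses is that the planarity constraints $\phi$ themselves contribute $2E$ of the $3E$ conditions needed, which is why only $E$ measurement rows remain; rigidifying each face independently ignores this and overshoots by $3V-6-E=\sum_f(k_f-3)$. Replacing your explicit pruning with this basis-extension step (at the cost of an explicit combinatorial description of \emph{which} $E$ face distances to take --- the paper only gives an algorithm, not a formula) completes the proof.
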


We'll prove this theorem as a corollary of \hyperref[thm:sufficient]%
{Theorem~\ref*{thm:sufficient}} below, which gives conditions for a set of
measurements to be sufficient. We first fix some notation. Choose
numberings~$v_{1},\dots, v_{V}$ and $f_{1},\dots, f_{F}$ of the vertices and
faces of~$\Pi$, and write $(x_{i}(P), y_{i}(P), z_{i}(P))$ for the coordinates
of vertex~$v_{i}$ of~$P$. We translate~$P$ if necessary to ensure that no
plane that contains a face of~$P$ passes through the origin. This allows us to
give an equation for the plane containing~$f_{j}$ in the form $a_{j}(P)x +
b_{j}(P)y + c_{j}(P)z = 1$ for some nonzero triple of real numbers~$(a_{j}(P),
b_{j}(P), c_{j}(P))$; similarly, for any realization~$Q$ of~$\Pi$ that's close
enough to~$P$ the $i$th vertex of~$Q$ is a triple~$(x_{i}(Q), y_{i}(Q),
z_{i}(Q)$ and the $j$th plane of~$Q$ can be described by an equation~$a_{j}%
(Q)x + b_{j}(Q)y + c_{j}(Q)z = 1$. Hence the coordinate functions $(x_{1},
y_{1}, z_{1}, x_{2}, y_{2}, z_{2}, \dots, a_{1}, b_{1}, c_{1}, \dots)$ give an
embedding into~$\mathbf{R}^{3V+3F}$ of some neighborhood of~$P$ in the space
of realizations of~$\Pi$.

For every pair $(v_{i}, f_{j})$ in $\mathcal{I}$ a realization~$Q$ should
satisfy the `vertex-on-face' condition
\[
a_{j}(Q)x_{i}(Q) + b_{j}(Q)y_{i}(Q) + c_{j}(Q)z_{i}(Q) = 1.
\]
Let $\phi_{i,j}$ be the function from $\mathbf{R}^{3V+3F}$ to $\mathbf{R}$
defined by
\[
\phi_{i, j}(x_{1}, y_{1}, z_{1},\dots, a_{1}, b_{1}, c_{1}, \dots) = a_{j}
x_{i} + b_{j} y_{i} + c_{j} z_{i} -1,
\]
and let $\phi\colon\mathbf{R}^{3V + 3F}\rightarrow\mathbf{R}^{2E}$ be the
vector-valued function whose components are the $\phi_{i,j}$ as $(v_{i},
f_{j})$ runs over all elements of~$\mathcal{I}$ (in some fixed order). Then a
vector in $\mathbf{R}^{3V + 3F}$ gives a realization of~$\Pi$ if and only if
it maps to the zero vector under~$\phi$.

We next present a combinatorial lemma, \hyperref[lem:order_faces_vertices]%
{Lemma~\ref*{lem:order_faces_vertices}}, that appears as an essential
component of many proofs of Steinitz's theorem, characterizing edge graphs of
polyhedra. (See Lemma 2.3 of \cite{grunbaum}, for example.) We give what we
believe to be a new proof of this lemma. First, an observation that is an easy
consequence of Euler's theorem.

\begin{lemma}
Suppose that $\Gamma$ is a planar bipartite graph of order~$r$.  Then there is
an ordering~$n_{1}, n_{2}, \dots, n_{r}$ of the nodes of~$\Gamma$  such that
each node~$n_{i}$ is adjacent to at most three preceding nodes.
\end{lemma}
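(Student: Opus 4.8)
The plan is to exploit the Euler-formula bound on edges of planar graphs together with a minimum-degree argument, peeling off one vertex at a time from the end of the ordering. Recall that a planar graph on $k$ nodes has at most $3k - 6$ edges (for $k \ge 3$), and a planar \emph{bipartite} graph on $k$ nodes has at most $2k - 4$ edges (for $k \ge 3$), since every face of a bipartite planar graph is bounded by at least four edges. The key consequence I would extract is: every planar bipartite graph on at least one node has a node of degree at most three. Indeed, if every node had degree at least four, then summing degrees would give at least $4k$, hence at least $2k$ edges, contradicting the bound $2k - 4$ once $k \ge 3$; the cases $k = 1, 2$ are trivial since then no node has degree as large as four.

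Given this, I would construct the ordering in reverse, building $n_r, n_{r-1}, \dots, n_1$ by repeatedly deleting a low-degree vertex. Formally, the argument is by induction on the order $r$ of $\Gamma$. The base case $r = 0$ (or $r = 1$) is vacuous. For the inductive step, apply the observation above to find a node $n_r$ of $\Gamma$ with $\deg(n_r) \le 3$. The induced subgraph $\Gamma' = \Gamma - n_r$ on the remaining $r - 1$ nodes is again planar and bipartite, so by the inductive hypothesis its nodes admit an ordering $n_1, n_2, \dots, n_{r-1}$ in which each $n_i$ is adjacent (within $\Gamma'$, hence within $\Gamma$) to at most three preceding nodes. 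Appending $n_r$ to the end yields an ordering of all of $\Gamma$; the newly added node $n_r$ is adjacent to at most three nodes total in $\Gamma$, so certainly to at most three preceding ones, and the condition for every other $n_i$ is inherited unchanged. This completes the induction.

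I do not anticipate a serious obstacle here; the only point requiring a little care is the edge bound for bipartite planar graphs and its small-order exceptions. One should note that the bound $2k-4$ requires $k \ge 3$ and a planar embedding in which one can speak of faces; for a graph that is a forest (no cycles) the face-counting argument does not directly apply, but in that case the conclusion is even easier, since a forest always has a node of degree at most one. So the cleanest writeup splits into the case where $\Gamma$ contains a cycle—where one invokes the $2k-4$ bound via Euler's formula—and the acyclic case, handled directly. Alternatively one can simply observe that any planar bipartite graph, whether or not it is connected or acyclic, satisfies $|E(\Gamma)| \le 2|V(\Gamma)| - 4$ when $|V(\Gamma)| \ge 3$ (applying the bound component by component and checking it is not violated by small components), which immediately forces a vertex of degree at most three and makes the induction go through uniformly.
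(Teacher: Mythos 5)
Your proof is correct and follows essentially the same route as the paper: invoke the $2r-4$ edge bound for planar bipartite graphs to find a node of degree at most three, delete it, and induct. The extra care you take with forests and small components is a reasonable refinement of a point the paper handles by citing the bound from Bollob\'as directly.
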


\begin{proof}
We give a proof by induction on~$r$.  If~$r \le 3$ then any ordering will
do.  If $r> 3$ then we can apply a standard consequence of Euler's formula
(see, for example, Theorem 16 of \cite{bollobas}), which states that the
number of edges in a bipartite planar graph of order~$r\ge 3$ is at most
$2r-4$.  If every node of~$\Gamma$ had degree at least~$4$ then the total
number of edges would be at least~$2r$, contradicting this result.  Hence
every nonempty planar bipartite graph has a node of degree at most~$3$;
call this node~$n_r$.  Now remove this node (and all incident edges),
leaving again a bipartite planar graph.  By the induction hypothesis, there
is an ordering~$n_1,\dots,n_{r-1}$ satisfying the conditions of the theorem,
and then $n_1,\dots, n_r$ gives the required ordering.
\end{proof}

\begin{lemma}
\label{lem:order_faces_vertices}  Let~$P$ be a convex polyhedron. Consider the
set $\mathcal{V}\cup\mathcal{F}$ consisting of  all vertices and all faces
of~$P$. It is possible to order the elements of  this set such that every
vertex or face in this set is incident with at most  three earlier elements of
$\mathcal{V}\cup\mathcal{F}$.
\end{lemma}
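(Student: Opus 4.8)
The plan is to deduce this lemma from the preceding one by exhibiting the vertex--face incidence structure of~$P$ as a bipartite \emph{planar} graph. Concretely, let~$\Gamma$ be the graph whose node set is~$\mathcal{V}\cup\mathcal{F}$ and in which a vertex~$v\in\mathcal{V}$ is joined to a face~$f\in\mathcal{F}$ precisely when~$(v,f)\in\mathcal{I}$. By construction~$\Gamma$ is bipartite, with the two parts being~$\mathcal{V}$ and~$\mathcal{F}$, and its order is~$r=V+F$. Once we know that~$\Gamma$ is planar, the preceding lemma applied to~$\Gamma$ yields an ordering of~$\mathcal{V}\cup\mathcal{F}$ in which each node is adjacent in~$\Gamma$---that is, incident in~$P$---to at most three earlier nodes, which is exactly the assertion.

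So the real content is the planarity of~$\Gamma$. First I would recall the standard fact that the $1$-skeleton (edge graph) of a convex polyhedron admits a planar embedding in which the bounded and unbounded faces of the drawing correspond bijectively to the faces of~$P$: one obtains such an embedding, for instance, by projecting~$\partial P$ radially from an interior point of~$P$ onto a surrounding sphere and then applying stereographic projection from a point in the relative interior of one of the projected faces (a Schlegel diagram). In this drawing each face~$f$ of~$P$ corresponds to an open region~$D_{f}$ that is homeomorphic to a disk and whose boundary passes through exactly the vertices incident to~$f$, in their cyclic order around~$f$.

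Next I would enlarge this drawing to a planar drawing of~$\Gamma$: leave the vertices of~$P$ where they are, and for each face~$f$ choose a point~$p_{f}$ in the interior of~$D_{f}$ to represent the node~$f$, then join~$p_{f}$ to each vertex~$v$ incident to~$f$ by a simple arc contained, apart from its endpoint~$v$, in the open region~$D_{f}$. Because the regions~$D_{f}$ are pairwise disjoint, arcs belonging to different faces never meet; and within a single disk~$D_{f}$ the arcs can be chosen pairwise disjoint apart from the common endpoint~$p_{f}$, like the spokes of a wheel joining the centre to the boundary vertices. This produces a planar embedding of~$\Gamma$, completing the argument.

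I expect the main obstacle---or at least the only point demanding real care---to be this planarity step, specifically the justification that the faces of a Schlegel-type diagram of~$P$ really are topological disks bounded exactly by the incident vertices and edges, so that the ``spokes'' can be drawn without crossings and without leaving~$\overline{D_{f}}$. (One can instead phrase the whole construction on the sphere to avoid singling out an outer face, then transfer to the plane at the end.) Everything else---bipartiteness of~$\Gamma$, the order count~$r=V+F$, and the final invocation of the preceding lemma---is immediate.
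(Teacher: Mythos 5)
Your proof is correct and follows essentially the same route as the paper: both construct the bipartite Levi graph on $\mathcal{V}\cup\mathcal{F}$, argue it is planar by drawing it on the boundary of~$P$ (the paper places a point on each face and projects to the plane; you use a Schlegel diagram and add the face nodes with spokes, which is the same idea spelled out in more detail), and then apply the preceding lemma.
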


\begin{proof}
We construct a graph~$\Gamma$ of order~$V+F$ as follows.  $\Gamma$ has one
node for each vertex of~$G$ and one node for each face of~$G$.  Whenever a
vertex $v$ of $P$ lies on a face $f$ of $P$ we introduce an edge of $\Gamma$
connecting the nodes corresponding to $v$ and $f$.  Since $P$ is convex, the
graph $\Gamma$ is planar; indeed, by choosing a point on each face of $P$,
one can draw the graph $\Gamma$ directly on the surface of $P$ and then
project onto the plane.  (The graph~$\Gamma$ is known as the \emph{Levi
graph} of the incidence structure~$\Pi = (\V, \F, \I)$.)  Now apply the
preceding lemma to this graph.
\end{proof}

We now show that the functions~$\phi_{i, j}$ are independent in a neighborhood
of~$P$. Write $D\phi(P)$ for the derivative of~$\phi$ at~$P$; as usual, we
regard $D\phi(P)$ as a $2E$-by-$(3V+3F)$ matrix with real entries.

\begin{lemma}
\label{Dphirank} The derivative $D\phi(P)$ has rank~$2E$.
\end{lemma}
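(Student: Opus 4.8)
The plan is to establish the equivalent assertion that $D\phi(P)\colon\R^{3V+3F}\to\R^{2E}$ is surjective. From the formula $\phi_{i,j} = a_j x_i + b_j y_i + c_j z_i - 1$ one reads off that, at~$P$, the partial derivatives of~$\phi_{i,j}$ vanish except in the three coordinate slots of the vertex~$v_i$, where they give the normal~$\nu_j := (a_j(P), b_j(P), c_j(P))$, and the three slots of the face~$f_j$, where they give the position~$p_i := (x_i(P), y_i(P), z_i(P))$. Thus, writing a tangent vector as families $(\dot p_i)_i$, $(\dot\nu_j)_j$ of vectors in~$\R^3$, we have $\bigl(D\phi(P)\,\delta\bigr)_{i,j} = \nu_j\cdot\dot p_i + p_i\cdot\dot\nu_j$ for every incidence $(v_i,f_j)\in\I$. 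We may refine the earlier normalization by further translating~$P$ so that the origin lies in its interior; then $\nu_j\cdot x\le1$ for all $x\in P$ with equality exactly on~$f_j$, so $\nu_j\cdot v_i = 1$ whenever $(v_i,f_j)\in\I$, and in particular every $v_i\ne 0$.

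Given a target $(t_{i,j})_{(v_i,f_j)\in\I}\in\R^{2E}$, I would construct a preimage~$\delta$ greedily along the ordering $n_1,\dots,n_{V+F}$ of $\V\cup\F$ supplied by \lemmaref{lem:order_faces_vertices}. When the node currently being processed is a vertex~$v_i$, it is incident with at most three \emph{earlier} faces~$f_j$, whose values $\dot\nu_j$ are already fixed; impose on the free unknown $\dot p_i\in\R^3$ the (at most three) linear equations $\nu_j\cdot\dot p_i = t_{i,j} - p_i\cdot\dot\nu_j$ and choose any solution. Symmetrically, when the current node is a face~$f_j$, it is incident with at most three earlier vertices~$v_i$, with $\dot p_i$ already fixed; solve the equations $p_i\cdot\dot\nu_j = t_{i,j} - \nu_j\cdot\dot p_i$ for~$\dot\nu_j$. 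Each incidence $(v_i,f_j)\in\I$ gets treated exactly once --- when the later of $v_i$, $f_j$ in the ordering is processed --- and the equation imposed at that moment is precisely $\nu_j\cdot\dot p_i + p_i\cdot\dot\nu_j = t_{i,j}$; hence the $\delta$ so built satisfies $D\phi(P)\,\delta = (t_{i,j})$, proving surjectivity.

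The one step requiring genuine argument is that these small linear systems are solvable, i.e.\ that their coefficient vectors are linearly independent. If a face~$f_j$ has three earlier incident vertices, the corresponding positions are three distinct, hence non-collinear, vertices of the convex polygon~$f_j$ lying on the plane $\setof{x}{\nu_j\cdot x = 1}$, which misses the origin; and non-collinear points on an affine plane avoiding the origin are linearly independent in~$\R^3$. If a vertex~$v_i$ has three earlier incident faces $f_{j_1},f_{j_2},f_{j_3}$, then $\nu_{j_1},\nu_{j_2},\nu_{j_3}$ lie on the plane $\setof{\nu}{\nu\cdot v_i = 1}$, again off the origin, so linear dependence would force them collinear there, and hence (after relabelling) $\nu_{j_2}$ a proper convex combination of $\nu_{j_1}$ and~$\nu_{j_3}$; but then, since $\nu_{j_1}\cdot x\le1$ and $\nu_{j_3}\cdot x\le1$ throughout~$P$, every point of~$f_{j_2}$ would satisfy $\nu_{j_1}\cdot x = \nu_{j_3}\cdot x = 1$ and so lie in $f_{j_1}\cap f_{j_3}$, which has dimension at most one --- contradicting that $f_{j_2}$ is two-dimensional. (Systems with one or two equations are handled the same way and are easier.) I expect this vertex case --- the planar dual of the fact that no three vertices of a face are collinear --- to be the only point where the convexity of~$P$ really enters; everything else is bookkeeping with the ordering.
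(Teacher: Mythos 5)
Your proof is correct and is essentially the paper's argument in transposed form: where the paper uses the ordering of \lemmaref{lem:order_faces_vertices} to delete rows from the top down and conclude that the rows of $D\phi(P)$ are linearly independent, you run the same block-triangular structure from the bottom up to solve $D\phi(P)\,\delta = t$ greedily, relying on the same two $3\times3$ nonsingularity facts (independence of up to three face normals through a common vertex, and of up to three vertex positions on a common face). The only real difference is cosmetic: you justify the normal-independence fact by a supporting-half-space/convex-combination argument (after an extra harmless translation putting the origin in the interior of $P$), whereas the paper notes directly that three faces meeting at a vertex cannot contain a common line.
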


In more abstract terms, this lemma implies that the space of all realizations
of $\Pi$ is, in a neighborhood of~$P$, a smooth manifold of dimension~$3V + 3F
- 2E = E + 6$.

\begin{proof}
We prove that there are no nontrivial linear relations on the~$2E$
rows of~$D\phi(P)$.  To illustrate the argument, suppose that the
vertex $v_1$ lies on the first three faces and no others.  Placing
the rows corresponding to $\phi_{1,1}$, $\phi_{1,2}$ and
$\phi_{1,3}$ first, and writing simply~$x_1$ for $x_1(P)$ and
similarly for the other coordinates, the matrix $D\phi(P)$ has the
following structure.
\[D\phi(P) =
\left(
\begin{array}
[c]{ccc|c||ccccccccc|c}%
a_{1} & b_{1} & c_{1} & 0\dots0 & x_{1} & y_{1} & z_{1} & 0 & 0 & 0 & 0 & 0 &
0 & 0\dots0\\
a_{2} & b_{2} & c_{2} & 0\dots0 & 0 & 0 & 0 & x_{2} & y_{2} & z_{2} & 0 & 0 &
0 & 0\dots0\\
a_{3} & b_{3} & c_{3} & 0\dots0 & 0 & 0 & 0 & 0 & 0 & 0 & x_{3} & y_{3} &
z_{3} & 0\dots0\\\hline
0 & 0 & 0 &  &  &  &  &  &  &  &  &  &  & \\
\vdots & \vdots & \vdots & \ast &  &  &  &  & \ast &  &  &  &  & \ast\\
0 & 0 & 0 &  &  &  &  &  &  &  &  &  &  &
\end{array}
\right)
\]
Here the vertical double bar separates the derivatives for the vertex
coordinates from those for the face coordinates.  Since the faces $f_1$, $f_2$
and $f_3$ cannot contain a common line, the $3$-by-$3$ submatrix in the top
left corner is nonsingular.  Since~$v_1$ lies on no other faces, all other
entries in the first three columns are zero.  Thus any nontrivial linear
relation of the rows cannot involve the first three rows. So $D\phi(P)$ has
full rank (that is, rank equal to the number of rows) if and only if the
matrix obtained from $D\phi(P)$ by deleting the first three rows has full
rank---that is, rank~$2E-3$.
Extrapolating from the above, given any vertex that lies on exactly three
faces, the three rows corresponding to that vertex may be removed from the
matrix $D\phi(P)$, and the new matrix has full rank if and only if $D\phi(P)$
does. The dual statement is also true: exchanging the roles of vertex and face
and using the fact that no three vertices of~$P$ are collinear we see that for
any triangular face~$f$ we may remove the three rows corresponding to~$f$ from
$D\phi(P)$, and again the resulting matrix has full rank if and only if the
$D\phi(P)$ does. Applying this idea inductively, if \emph{every} vertex of $P$
lies on exactly three faces (as in for example the regular tetrahedron, cube
or dodecahedron), or dually if \emph{every} face of $P$ is triangular (as in
for example the tetrahedron, octahedron or icosahedron) then the lemma is
proved.
For the general case, we choose an ordering of the faces and vertices as in
\lemmaref{lem:order_faces_vertices}.  Then, starting from the top end of this
ordering, we remove faces and vertices from the list one-by-one, removing
corresponding rows of~$D\phi(P)$ at the same time.  At each removal, the new
matrix has full rank if and only if the old one does.  But after removing all
faces and vertices we're left with a~$0$-by-$2E$ matrix, which certainly has
rank~$0$.  So $D\phi(P)$ has rank~$2E$.
\end{proof}

We now prove a general criterion for a set of measurements to be sufficient.
Given the previous lemma, this criterion is essentially a direct consequence
of the inverse function theorem.

\begin{definition}
A \emph{measurement} for $P$ is a smooth  function~$m$ defined on an open
neighborhood of~$P$ in the space of  realizations of~$\Pi$, such that $m$ is
invariant under rotations  and translations.
\end{definition}

Given any such measurement~$m$, it follows from \hyperref[Dphirank]%
{Lemma~\ref*{Dphirank}} that we can extend~$m$ to a smooth function on a
neighborhood of $P$ in $\mathbf{R}^{3V+3F}$. Then the derivative~$Dm(P)$ is a
row vector of length~$3V + 3F$, well-defined up to a linear combination of the
rows in $D\phi(P)$.

\begin{theorem}
\label{thm:sufficient}  Let $S$ be a finite set of measurements for~$\Pi$
near~$P$. Let  $\psi\colon\mathbf{R}^{3V+3F}\rightarrow\mathbf{R}^{|S|}$ be
the vector-valued  function obtained by combining the measurements in~$S$,
and  write~$D\psi(P)$ for its derivative at~$P$, an $|S|$-by-$(3V+3F)$  matrix
whose rows are the derivatives~$Dm(P)$ for $m$ in~$S$. Then  the matrix
\[
D(\phi,\psi)(P) =
\begin{pmatrix}
D\phi(P)\\
D\psi(P)
\end{pmatrix}
\]
has rank at most~$3E$, and if it has rank  exactly~$3E$ then the measurements
in~$S$ are sufficient to  determine congruence: that is, for any
realization~$Q$ of $\Pi$,  sufficiently close to~$P$, if $m(Q) = m(P)$ for all
$m$ in $S$ then  $Q$ is congruent to~$P$.
\end{theorem}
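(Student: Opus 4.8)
The plan is to read both assertions geometrically, using the smooth structure on the space of realizations supplied by \lemmaref{Dphirank} together with the action of the congruence group on it. By \lemmaref{Dphirank}, $D\phi(P)$ has rank $2E$, so $K := \ker D\phi(P)$ has dimension $3V+3F-2E = E+6$ (using Euler's formula $V+F=E+2$), and $K$ is the tangent space at $P$ to the realization manifold $M = \phi^{-1}(0)$ near $P$. The congruence group $G = SO(3)\ltimes\mathbf{R}^{3}$ is $6$-dimensional and acts on the space of realizations; for $g$ near the identity this action is defined, in our chart, on a neighborhood of $P$ (small congruences keep every face plane off the origin, so the normalization $a_{j}x+b_{j}y+c_{j}z=1$ survives). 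Since $P$ is bounded with finitely many vertices, no nontrivial translation and no one-parameter family of rotations can fix $P$, so the stabilizer of $P$ in $G$ is finite; hence the orbit $G\cdot P$ is, near $P$, an embedded $6$-dimensional submanifold of $M$. Write $T\subseteq K$ for its tangent space at $P$.

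For the bound $\operatorname{rank} D(\phi,\psi)(P)\le 3E$: each measurement $m\in S$ satisfies $m(g\cdot Q)=m(Q)$, and differentiating at $g=e$ shows $Dm(P)$ annihilates every tangent vector to the orbit, i.e. $Dm(P)|_{T}=0$. (The row vector $Dm(P)$ is only well-defined modulo the rows of $D\phi(P)$, but its restriction to $K\supseteq T$ is unambiguous, so this is meaningful.) Hence $T$ is contained in the common kernel $K\cap\ker D\psi(P)$ of all rows of $D(\phi,\psi)(P)$, which therefore has dimension at least $6$, and
\[
\operatorname{rank} D(\phi,\psi)(P) = (3V+3F) - \dim\bigl(K\cap\ker D\psi(P)\bigr) \le (3E+6)-6 = 3E.
\]

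Now suppose the rank equals $3E$; then $K\cap\ker D\psi(P)$ has dimension exactly $6$, and as it contains $T$ and $\dim T = 6$ it must equal $T$. Thus $D(\psi|_{M})(P)\colon K\to\mathbf{R}^{|S|}$ has kernel exactly $T$. I would then invoke a slice for the $G$-action: choose a submanifold $\Sigma\subseteq M$ through $P$ with $\dim\Sigma = E = \dim M-\dim G$ and $T_{P}\Sigma\oplus T = K$; since the stabilizer is finite, $(g,s)\mapsto g\cdot s$ is a local diffeomorphism from a neighborhood of $(e,P)$ in $G\times\Sigma$ onto a neighborhood of $P$ in $M$. The derivative of $\psi|_{\Sigma}$ at $P$ is the restriction of $D(\psi|_{M})(P)$ to $T_{P}\Sigma$, whose kernel is $T_{P}\Sigma\cap T = 0$; hence $\psi|_{\Sigma}$ is an immersion at $P$ and so injective on a neighborhood of $P$ in $\Sigma$. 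Finally, given a realization $Q$ close to $P$ with $m(Q)=m(P)$ for all $m\in S$, write $Q=g\cdot s$ with $g$ near $e$ and $s\in\Sigma$ near $P$; $G$-invariance of the measurements gives $\psi(s)=\psi(g^{-1}\cdot Q)=\psi(Q)=\psi(P)$, local injectivity of $\psi|_{\Sigma}$ forces $s=P$, and so $Q=g\cdot P$ is congruent to $P$.

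The main obstacle I expect is not any single estimate but the structural bookkeeping around the $G$-action near $P$: checking that the congruence group genuinely acts on a neighborhood of $P$ in the space of realizations despite the chart-dependent normalization of face planes, that the orbit of $P$ is an embedded $6$-dimensional submanifold (which needs finiteness of the symmetry group of $P$), and that a slice $\Sigma$ exists with the stated transversality. Once these facts are in place, the rank count and the inverse-function-theorem argument are routine. A secondary point requiring care throughout is that $Dm(P)$ is defined only up to the rows of $D\phi(P)$, so every statement about kernels must be made on $K=\ker D\phi(P)$, where this ambiguity disappears.
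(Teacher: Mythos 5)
Your argument is correct and, at bottom, it is the paper's argument written in coordinate-free language: both proofs get the bound $\operatorname{rank} D(\phi,\psi)(P)\le 3E$ by exhibiting a $6$-dimensional space of infinitesimal congruences inside the common kernel, and both conclude from rank $3E$ that the measurements are injective transverse to the congruence orbit. The difference is in execution. Where you invoke the orbit $G\cdot P$ and an abstract slice $\Sigma$, the paper writes down the explicit $(3V+3F)$-by-$6$ matrix $G$ whose columns are the velocity vectors of the three coordinate translations and three coordinate rotations (a basis for your $T$), and uses a concrete slice: the normalization conditions $\chi_1=\dots=\chi_6=0$ fixing $v_1$, the direction of $v_1v_2$, and the plane through $v_1,v_2,v_3$. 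The single computation that $D\chi(P)\,G$ is an explicit $6$-by-$6$ matrix with determinant $(y_3-y_1)(x_2-x_1)^2\ne 0$ then discharges at once the three pieces of ``structural bookkeeping'' you flag as the main obstacles: it proves the columns of $G$ are independent (your $\dim T=6$, i.e.\ triviality of the infinitesimal stabilizer), it proves the slice $\{\chi=0\}$ is transverse to the orbit, and---since every realization has a unique, continuously varying normalization---it replaces your local factorization $Q=g\cdot s$ by a globally defined normalization map, after which one application of the inverse function theorem to $(\phi,\psi,\chi)$ finishes the proof. Your route buys conceptual transparency (the statement is visibly about the quotient by the congruence group) at the price of having to set up the slice machinery; the paper trades that for one small determinant. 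One loose end to tidy in your write-up: to conclude that the infinitesimal stabilizer is trivial you must exclude not only pure translations and pure rotations but also screw motions (rotation combined with translation along the axis); this is immediate, since such a vector field on $\mathbf{R}^3$ has no zeros at all while the vertices of $P$ do not lie on a line, but it should be stated.
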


\begin{proof} Let $Q(t)$ be any smooth one-dimensional family of
realizations of~$\Pi$ such that~$Q(0) = P$ and $Q(t)$ is congruent
to~$P$ for all~$t$.  Since each $Q(t)$ is a valid realization,
$\phi(Q(t))=0$ for all~$t$.  Differentiating and applying
the chain rule at~$t=0$ gives the matrix equation $D\phi(P) Q'(0) = 0$
where $Q'(0)$ is thought of as a column vector of length~$3V + 3F$.
The same argument applies to the map~$\psi$: since $Q(t)$ is
congruent to $P$ for all~$t$, $\psi(Q(t)) = \psi(P)$ is constant and
$D\psi(P) Q'(0) = 0$.
We apply this argument first to the three families where~$Q(t)$ is
$P$ translated~$t$ units along the~$x$-axis, $y$-axis, or $z$-axis
respectively, and second when~$Q(t)$ is $P$ rotated by~$t$ radians
around the $x$-axis, the $y$-axis and the $z$-axis.  This gives~$6$
column vectors that are annihilated by both $D\phi(P)$ and
$D\psi(P)$.  Writing~$G$ for the $(3V+3F)$-by-$6$ matrix obtained
from these column vectors, we have the matrix equation
$$\begin{pmatrix} D\phi(P)\\D\psi(P)
\end{pmatrix} G = 0.$$ It's straightforward to compute~$G$ directly;
we leave it to the reader to check that the transpose of~$G$ is
$$\left(\begin{array}[c]{ccccccc||ccccccc}
1& 0& 0& 1& 0& 0& \dots& -a_1^2& -a_1b_1& -a_1c_1& -a_2^2& -a_2b_2& -a_2c_2&\dots \\
0& 1& 0& 0& 1& 0& \dots& -a_1b_1& -b_1^2& -b_1c_1& -a_2b_2& -b_2^2& -b_2c_2& \dots \\
0& 0& 1& 0& 0& 1& \dots& -a_1c_1& -b_1c_1& -c_1^2& -a_2c_2& -b_2c_2& -c_2^2& \dots \\
0& -z_1& y_1& 0& -z_2& y_2& \dots& 0& -c_1& b_1& 0& -c_2& b_2& \dots \\
z_1& 0& -x_1& z_2& 0& -x_2&\dots& c_1& 0& -a_1& c_2& 0& -a_2&\dots \\
-y_1& x_1& 0& -y_2& x_2& 0&\dots& -b_1& a_1& 0& -b_2& a_2& 0&\dots
\end{array}\right)$$
For the final part of this argument, we introduce a notion of
normalization on the space of realizations of~$\Pi$.  We'll say that
a realization~$Q$ of~$\Pi$ is \emph{normalized} if $v_1(Q) =
v_1(P)$, the vector from~$v_1(Q)$ to $v_2(Q)$ is in the direction of
the positive~$x$-axis, and~$v_1(Q)$, $v_2(Q)$ and $v_3(Q)$ all lie
in a plane parallel to the $xy$-plane, with $v_3(Q)$ lying in the
positive $y$-direction from~$v_1(P)$ and $v_2(P)$.  In terms of
coordinates we require that $x_1(P) = x_1(Q) < x_2(Q)$, $y_1(P) =
y_1(Q) = y_2(Q) < y_3(Q)$ and $z_1(P) = z_1(Q) = z_2(Q) = z_3(Q)$.
Clearly every realization~$Q$ of~$\Pi$ is congruent to a unique
normalized realization, which we'll refer to as the
\emph{normalization} of~$Q$.  Note that the normalization operation
is a continuous map on a neighborhood of~$P$.
Without loss of generality, rotating~$P$ around the
origin if necessary, we may assume that~$P$ itself is normalized.
The condition that a realization~$Q$ be normalized gives six more
conditions on the coordinates of~$Q$, corresponding to six extra
functions $\chi_1,\dots, \chi_6$, which we use to augment the
function $(\phi,\psi)\colon \R^{3V + 3F}\rightarrow \R^{2E + |S|}$
to a function $(\phi,\psi,\chi)\colon \R^{3V + 3F}\rightarrow \R^{2E
+ |S| + 6}$.  These six functions are simply~$\chi_1(Q) =
x_1(Q)-x_1(P)$, $\chi_2(Q) = y_1(Q) - y_1(P)$, $\chi_3(Q) = z_1(Q) -
z_1(P)$, $\chi_4(Q) = y_2(Q) - y_1(P)$, $\chi_5(Q) = z_2(Q)-z_1(P)$
and $\chi_6(Q) = z_3(Q) - z_1(P)$.
\begin{claim} The $6$-by-$6$ matrix $D\chi(P) G$ is invertible.
\end{claim}
\begin{proof} The matrix for~$G$ was given earlier; the
  product~$D\chi(P)G$ is easily verified to be
$$\begin{pmatrix}
1 & 0 & 0 & 0    & z_1  & -y_1 \\
0 & 1 & 0 & -z_1 & 0    & x_1  \\
0 & 0 & 1 & y_1  & -x_1 & 0    \\
0 & 1 & 0 & -z_2 & 0    & x_2  \\
0 & 0 & 1 & y_2  & -x_2 & 0    \\
0 & 0 & 1 & y_3  & -x_3 & 0    \\
\end{pmatrix}$$
which has nonzero determinant $(y_3-y_1)(x_2-x_1)^2$.
\end{proof}
As a corollary, the columns of~$G$ are linearly independent, which
proves that the matrix in the statement of the theorem has rank at
most~$3E$.  Similarly, the rows of $D\chi(P)$ must be linearly
independent, and moreover no nontrivial linear combination of those
rows is a linear combination of the rows of $D\psi(P)$.  Hence if
$D\psi(P)$ has rank exactly~$3E$ then the augmented matrix
$$\begin{pmatrix}
D\phi(P) \\
D\psi(P) \\
D\chi(P)
\end{pmatrix}$$ has rank~$3E+6$.  Hence the map $(\phi, \psi, \chi)$
has injective derivative at~$P$, and so by the inverse function
theorem the map $(\phi, \psi, \chi)$ itself is injective on a
neighborhood of~$P$ in~$\R^{3V+3F}$.
Now suppose that $Q$ is a polyhedron as in the statement of the
theorem.  Let $R$ be the normalization of~$Q$.  Then $\phi(R) =
\phi(Q) = \phi(P) = 0$, $\psi(R) = \psi(Q) = \psi(P)$, and $\chi(R)
= \chi(P)$.  So if~$Q$ is sufficiently close to~$P$, then by
continuity of the normalization map~$R$ is close to~$P$ and hence~$R
= P$ by the inverse function theorem.  So~$Q$ is congruent to~$R = P$ as
required.
\end{proof}

\begin{definition}
Call a set~$S$ of measurements \emph{sufficient}  for~$P$ if the conditions of
the above theorem apply: that is, the  matrix $D(\phi,\psi)(P)$ has rank~$3E$.
\end{definition}

\begin{corollary}
\label{cor:sizeE} Given a sufficient set~$S$ of  measurements, there's  a
subset of $S$ of size~$E$ that's also sufficient.
\end{corollary}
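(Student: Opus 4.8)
The plan is to read this off as a purely linear-algebraic statement about the rows of $D(\phi,\psi)(P)$, exploiting \lemmaref{Dphirank}, which tells us that the block $D\phi(P)$ already accounts for rank~$2E$. Write $U\subseteq\R^{3V+3F}$ for the row space of $D\phi(P)$, so $\dim U = 2E$, and for each $m\in S$ let $w_m = Dm(P)$ be the corresponding row of $D\psi(P)$. Recall that $w_m$ is only well-defined modulo $U$, but this will not matter, since every quantity below depends only on the subspace $U + \operatorname{span}\{w_m : m\in T\}$ for various $T\subseteq S$; and by definition $T$ is sufficient precisely when $\dim\bigl(U + \operatorname{span}\{w_m : m\in T\}\bigr) = 3E$.

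First I would note that sufficiency of $S$ forces $|S|\ge E$, since $U$ contributes only $2E$ of the $3E$ dimensions of $U + \operatorname{span}\{w_m : m\in S\}$, so the family $(w_m)_{m\in S}$ must supply at least $E$ more. Next, pass to the quotient $W = \bigl(U + \operatorname{span}\{w_m : m\in S\}\bigr)/U$, which has dimension $3E - 2E = E$. The images $\bar w_m$ of the $w_m$ span $W$ (the $U$-part maps to zero), so, as any spanning set of a finite-dimensional vector space contains a basis, there is a subset $S'\subseteq S$ with $|S'| = E$ such that $\{\bar w_m : m\in S'\}$ is a basis of $W$.

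It remains to check that $S'$ is sufficient, i.e.\ that $\dim\bigl(U + \operatorname{span}\{w_m : m\in S'\}\bigr) = 3E$. But the quotient of this subspace by $U$ is spanned by the linearly independent vectors $\{\bar w_m : m\in S'\}$, hence has dimension $E$; adding $\dim U = 2E$ gives $3E$. Equivalently, the matrix $D(\phi,\psi')(P)$ built from the measurements in $S'$ has rank $2E + E = 3E$, which is exactly the condition of \theoremref{thm:sufficient}, so $S'$ is a sufficient subset of size~$E$.

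There is no real obstacle here: the whole argument is the statement that a spanning set contains a basis, carried out in the quotient by the row space of $D\phi(P)$. The only two points deserving a moment's attention are that $|S|\ge E$, so that a size-$E$ subset exists, and that the non-canonical choice of each $Dm(P)$ modulo $U$ is harmless, because sufficiency of any subset is a property of the subspace $U + \operatorname{span}\{Dm(P)\}$ alone.
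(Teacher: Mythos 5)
Your argument is correct and is essentially the paper's own proof, just spelled out in more detail: the paper likewise invokes Lemma~\ref{Dphirank} to get the $2E$ independent rows of $D\phi(P)$ and then extends to a rank-$3E$ system by selecting $E$ rows of $D\psi(P)$, which is exactly your ``spanning set contains a basis'' step carried out in the quotient by the row space of $D\phi(P)$. Your added remark that each $Dm(P)$ is only well-defined modulo that row space, and that this ambiguity is harmless, is a worthwhile clarification the paper leaves implicit.
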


\begin{proof} Since the matrix $D(\phi,\psi)(P)$ has rank~$3E$ by assumption,
and the~$2E$ rows coming from $D\phi(P)$ are all linearly
independent by \lemmaref{Dphirank}, we can find~$E$ rows
corresponding to measurements in~$S$ such that $D\phi(P)$ together
with those~$E$ rows has rank~$3E$.
\end{proof}

The final ingredient that we need for the proof of
\hyperref[thm:main]%
{Theorem~\ref*{thm:main}} is the infinitesimal version of Cauchy's
rigidity theorem, originally due to Dehn, and later given a simpler
proof by Alexandrov.  We phrase it in terms of the notation and
definitions above.

\begin{theorem}
\label{thm:dehn}
Let $P$ be a convex polyhedron, and suppose that $Q(t)$ is a continuous
family of polyhedra specializing to~$P$ at  $t=0$. Suppose that the
real-valued function $m\circ Q$ is  stationary (that is, its derivative
vanishes) at $t=0$ for each face distance~$m$. Then $Q^{\prime}(0)$ is in
the span of the columns of~$G$  above.
\end{theorem}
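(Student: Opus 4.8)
The plan is to recognise this as Dehn's infinitesimal rigidity theorem for convex polyhedra (with arbitrary, not necessarily triangular, faces) and to give Alexandrov's proof of it, adapted to the present notation. Note first that $G$ was constructed in the proof of \theoremref{thm:sufficient} precisely so that its columns are the six infinitesimal rigid motions of the realisation~$P$ (shown there to be linearly independent), so the assertion is that a deformation direction $Q'(0)$ along which every face distance is stationary must itself be an infinitesimal rigid motion. I would argue in three stages: \emph{(i)} each face of~$P$ moves infinitesimally rigidly; \emph{(ii)} comparing adjacent faces yields, for each edge~$e$, an ``infinitesimal dihedral angle change'' $\theta_e$, subject at each vertex to the relation $\sum_e\theta_e\,\hat e=0$, summing over the edges $e$ at that vertex with $\hat e$ the unit vector along~$e$; \emph{(iii)} a convexity-based sign-counting lemma together with Cauchy's combinatorial lemma forces every $\theta_e$ to vanish.

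For \emph{(i)} I would fix a face~$f$ and choose coordinates in which its plane is $\{z=0\}$ at $t=0$. Writing $Q'(0)$ as a velocity field $u_v\in\R^3$ on the vertices, stationarity of the face distance between vertices $v,w$ of~$f$ reads $(u_v-u_w)\cdot(p_v-p_w)=0$; since $p_v-p_w$ is horizontal this constrains only the horizontal parts $\bar u_v$, making $(\bar u_v)$ an infinitesimal flex of the complete-graph framework on the vertices of~$f$. Those vertices are coplanar and, being vertices of a convex polyhedron, no three are collinear, so this framework is infinitesimally rigid in its plane and $\bar u|_f$ is the restriction of an infinitesimal isometry of the plane. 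Because each $Q(t)$ has planar faces the vertices of~$f$ stay coplanar; linearising that coplanarity condition forces the vertical parts $u_v^z$ to be an affine function of the horizontal coordinates. Combining the two, $u|_f$ is the restriction of an infinitesimal isometry $A_f$ of~$\R^3$. (Alternatively one could triangulate the faces---their diagonal lengths are among the face distances---and quote the classical infinitesimal rigidity theorem for triangulated convex polyhedra, bypassing stages (ii)--(iii); I prefer to keep everything inside the paper's face-based framework.)

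For \emph{(ii)}: if faces $f,f'$ share the edge $e$ with endpoints $v,w$, then $A_f$ and $A_{f'}$ agree at $v$ and at $w$, so $A_f-A_{f'}$ is an infinitesimal rotation about the line of~$e$; let $\theta_e$ be its signed angular speed. Around a vertex~$v$ the incident faces form a cycle $f_1,\dots,f_m$ with edge $e_j$ between $f_j$ and $f_{j+1}$; telescoping $\sum_j(A_{f_{j+1}}-A_{f_j})=0$ (with $f_{m+1}=f_1$) and using that every summand fixes~$v$ gives $\sum_j\theta_{e_j}\hat e_j=0$. For \emph{(iii)}, with a consistent choice of orientations $\operatorname{sgn}\theta_e$ is the sign of the rate of change of the interior dihedral angle at~$e$, so the edges with $\theta_e\neq0$ carry a globally well-defined $\pm$ sign. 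The convexity lemma is that at any vertex meeting such an edge the cyclic sequence of signs changes at least four times: the directions $\hat e_j$ at~$v$ are, in their cyclic order, the vertices of the convex polygon obtained by slicing the pointed cone of~$P$ at~$v$, so any two complementary contiguous arcs among them are separated by a hyperplane through~$v$; the corresponding linear functional, applied to $\sum_j\theta_{e_j}\hat e_j=0$, would otherwise exhibit the left side as a sum of terms all of one sign (those with $\theta_{e_j}\neq0$ strictly so), which is impossible. Cauchy's combinatorial sign-change lemma (in the corrected form due to Steinitz), applied to the edge graph of~$P$ with this labelling, produces a vertex incident to a labelled edge at which the signs change at most twice---a contradiction unless no edge is labelled. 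Hence every $\theta_e=0$, all the $A_f$ coincide with one infinitesimal isometry~$A$, and $Q'(0)$ agrees with~$A$ on every vertex. Finally $D\phi(P)$ has full rank by \lemmaref{Dphirank}, so the face-plane coordinates of any element of $\ker D\phi(P)$---in particular of $Q'(0)$---are determined by its vertex coordinates; hence $Q'(0)$ equals the element of the column span of~$G$ induced by~$A$, as required.

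The step I expect to be the real obstacle is the convexity sign-counting lemma in stage~\emph{(iii)}: this is where convexity genuinely enters, through the cross-section and separating-hyperplane construction, and it has to be complemented by a careful check that the per-edge signs assemble into a single globally consistent labelling so that Cauchy's combinatorial lemma applies---and that lemma must itself be used in its subtle Steinitz-corrected form. A secondary point needing care is the infinitesimal rigidity of planar complete-graph frameworks invoked in stage~\emph{(i)}, together with the verification that the coplanarity of the faces of $Q(t)$ really does linearise to the claimed affine condition on the $u_v^z$.
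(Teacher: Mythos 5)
Your argument is correct and is precisely the Dehn--Alexandrov proof that the paper does not reproduce but simply cites (Alexandrov, Ch.~10 \S1; Gluck, \S5): per-face infinitesimal isometries obtained from the face distances plus the linearised coplanarity condition, the vertex closing relation $\sum_e\theta_e\hat e=0$, the convexity-based four-sign-change count, and Cauchy's combinatorial lemma. The only cosmetic gap is at the very end: that the face-plane components of an element of $\ker D\phi(P)$ are determined by its vertex components follows not from the full rank of $D\phi(P)$ as such, but from the fact that each face has three non-collinear vertices lying on a plane avoiding the origin, so that the resulting $3\times 3$ linear system for $(\dot a_j,\dot b_j,\dot c_j)$ is nonsingular.
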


\begin{proof}
  See Chapter 10, section 1 of \cite{alexandrov}.  See also section 5
  of \cite{Gluck} for a short self-contained version of Alexandrov's
  proof.
\end{proof}

\begin{corollary}
\label{cor:cauchy} The set of all face distances is sufficient.
\end{corollary}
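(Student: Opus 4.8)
The plan is to verify the rank criterion of \theoremref{thm:sufficient} for the set~$S$ of all face distances, using the infinitesimal Cauchy theorem \theoremref{thm:dehn} to pin down the kernel of the combined derivative. By \theoremref{thm:sufficient} we already know that $D(\phi,\psi)(P)$ has rank \emph{at most}~$3E$; since Euler's formula gives $3V+3F = 3E+6$, this is the same as saying that the kernel of $D(\phi,\psi)(P)$, as a subspace of~$\R^{3V+3F}$, has dimension at least~$6$. What remains to prove is that this kernel has dimension exactly~$6$, equivalently that it is contained in the span of the columns of the matrix~$G$ built in the proof of \theoremref{thm:sufficient}, which is $6$-dimensional since those columns were shown there to be linearly independent.

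So the main step I would carry out is the following. Let $w\in\R^{3V+3F}$ be an arbitrary vector with $D\phi(P)\,w = 0$ and $D\psi(P)\,w = 0$. By \lemmaref{Dphirank} the space of realizations of~$\Pi$ is a smooth manifold near~$P$, cut out as $\phi^{-1}(0)$ with $D\phi(P)$ of full rank, so its tangent space at~$P$ is exactly $\ker D\phi(P)$. Hence there is a smooth one-parameter family $Q(t)$ of realizations with $Q(0)=P$ and $Q'(0)=w$; and for small~$t$ these realizations are genuine convex polyhedra of the same combinatorial type as~$P$ (taking convex hulls, as discussed earlier). Each face distance~$m$ is a smooth function on a neighborhood of~$P$ invariant under rotations and translations --- the Euclidean distance between two vertices on a common face is smooth wherever those vertices are distinct, which holds near~$P$ --- so it is a measurement belonging to~$S$. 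The hypothesis $D\psi(P)\,w = 0$ says precisely that $\tfrac{d}{dt}\big(m\circ Q\big)(0) = Dm(P)\,w = 0$ for every face distance~$m$, i.e.\ that $m\circ Q$ is stationary at $t=0$ for all face distances. \theoremref{thm:dehn} then applies and yields $w = Q'(0)\in\operatorname{span}(G)$.

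This gives $\ker D(\phi,\psi)(P)\subseteq\operatorname{span}(G)$, so the kernel has dimension at most~$6$ and $D(\phi,\psi)(P)$ has rank at least $3V+3F-6 = 3E$. Together with the upper bound from \theoremref{thm:sufficient} the rank is exactly~$3E$, which by definition means that~$S$ is sufficient.

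I expect the only genuinely delicate point to be the bridge in the second paragraph: \theoremref{thm:dehn} is stated in terms of \emph{families} $Q(t)$ of polyhedra, whereas the rank condition is a statement about a single linear map, so one must invoke the manifold structure from \lemmaref{Dphirank} to realize an arbitrary kernel vector~$w$ as the velocity of an honest smooth curve of \emph{convex} polyhedra to which Dehn's theorem applies. Everything else --- the dimension bookkeeping via Euler's formula and the smoothness/invariance of face distances --- is routine, and the hard analytic content (infinitesimal rigidity itself) is imported wholesale from \theoremref{thm:dehn}.
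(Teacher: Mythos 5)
Your proposal is correct and follows exactly the route of the paper's own proof: show that any kernel vector of $D(\phi,\psi)(P)$ lies in the span of the columns of~$G$ via \theoremref{thm:dehn}, then conclude rank~$3E$ by rank--nullity and Euler's formula. In fact you are more careful than the paper at the one point it glosses over, namely using \lemmaref{Dphirank} to realize an arbitrary kernel vector as the velocity of an honest curve of convex polyhedra before invoking \theoremref{thm:dehn}.
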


\begin{proof}
  Let $S$ be the collection of all face distances. Then the matrix
  $D(\phi, \psi)(P)$ has rank~$3E$.  Now \theoremref{thm:dehn} implies
  that for any column vector~$v$ such that $D(\phi,\psi)(P) v = 0$,
  $v$ is in the span of the columns of~$G$.  Hence the kernel of the
  map $D(\phi,\psi)(P)$ has dimension exactly~$6$ and so by the
  rank-nullity theorem together with Euler's formula the rank of
  $D(\phi,\psi)(P)$ is $3V+3F-6 = 3E$.
\end{proof}

Now \hyperref[thm:main]%
{Theorem~\ref*{thm:main}} follows from \hyperref[cor:cauchy]%
{Corollary~\ref*{cor:cauchy}} together with \hyperref[cor:sizeE]%
{Corollary~\ref*{cor:sizeE}}.

Finding an explicit sufficient set of~$E$ face distances is now a
simple matter of turning the proof of \ref{cor:sizeE} into a
constructive algorithm.  First compute the matrices $D\phi(P)$ and
$D\psi(P)$ (the latter corresponding to the set~$S$ of \emph{all} face
distances).  Initialize a variable~$T$ to the empty set.  Now iterate
through the rows of $D\psi(P)$: for each row, if that row is a linear
combination of the rows of $D\phi(P)$ and the rows of $D\psi(P)$
corresponding to measurements already in~$T$, discard it.  Otherwise,
add the corresponding measurement to~$T$.  Eventually,~$T$ will be a
sufficient set of~$E$ face distances.

A computer program has been written in the language Python to
implement this algorithm.  This program is attached as Appendix B of
the online version of this paper, available at www.arXiv.org. 
We hope that the comments within the program are sufficient explanation
for those who wish to try it, and we thank Eric Korman for a number of
these comments.

The algorithm works more generally.  Given any sufficient set of measurements,
 which may include angles, it will extract a subset of 
$E$ measurements which is sufficient.
It will also find a set of angle measurements which determines a polyhedron up 
to similarity. As an example we consider a similarity calculation for a 
dodecahedron. Our
allowed measurements will be the set of angles formed by pairs of lines from a
vertex to two other points on a face containing that vertex. In Figure 2 we
show a set of $29$ such angles which the program determined to characterize a
dodecahedron up to similarity.

\begin{center}
{\parbox[b]{1.708in}{\begin{center}
\includegraphics[
natheight=2.698200in,
natwidth=2.500200in,
height=1.8421in,
width=1.708in
]{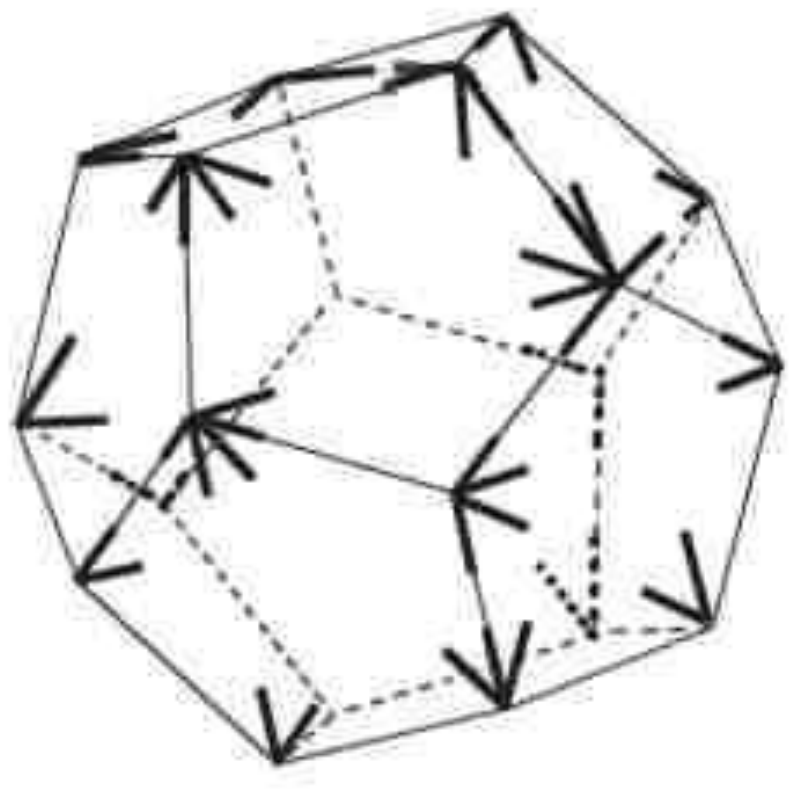}\\
Figure 2.  A set of 29 angles which locally determine a dodecahedron up to
similarity.
\end{center}}}
\end{center}

There is no restriction of the method to Platonic solids.  Data for a
number of other examples can be found in the program listing. Among
these examples are several which are non-convex. The program appears
to give a result for many of these, but we have not extended the
theory beyond the convex case.

\subsection{Related work}

The results and ideas of section~2 are, for the most part, not new. The idea
of an abstract polyhedron represented by an incidence structure, and its
realizations in $\mathbf{R}^{3}$, appears in section~2 of~\cite{whiteley}. In
Corollary~15 of that paper, Whiteley proves that the space of realizations of
a `spherical' incidence structure (equivalent to an incidence structure
arising from a convex polyhedron) has dimension~$E$. The essential
combinatorial content of the proof of \hyperref[Dphirank]%
{Lemma~\ref*{Dphirank}} is often referred to as `Steinitz's lemma', and 
a variety of proofs appear in the literature (\cite{Lyusternik}, \cite{grunbaumbook}); we
believe that the proof above is new.

\section{How many measurements are enough?}

\hyperref[thm:main]{Theorem~\ref*{thm:main}} provides an upper bound for the
number of distance/angle measurements needed to describe a polyhedron with
given combinatorial structure. But it turns out that many interesting
polyhedra can be described with fewer measurements. In particular, a cube can
be determined by $9$ distance/angle measurements instead of $12.$ Also, $10$
distance measurements (no angles used) suffice.

This phenomenon appears already in dimension two. Much of this section deals
with polygons and sets of points in the plane. In this simpler setting one can
often find precisely the smallest number of measurements needed. Extending
these results to polyhedra, with or without fixing their combinatorial
structure, certainly deserves further study.

So, we turn our attention to polygons. Unlike polyhedra, their combinatorial
structure is relatively simple, especially if we restrict to the case of
convex polygons. One can argue about the kinds of measurements that should be
allowed, but the following three seem the most natural to us.

\begin{definition}
Suppose that $A_{1}A_{2}\cdot\cdot\cdot A_{n}$ is a convex polygon. (More
generally, suppose $\{A_{1},A_{2},...,A_{n}\}$ is a sequence of distinct points on the
plane).  Then the following are called simple measurements:

\begin{enumerate}
\item distances $\left\vert A_{i}A_{j}\right\vert ,$ $i\neq j$

\item angles $\angle A_{i}A_{j}A_{k}$ for $i,j,k$ distinct

\item `diagonal angles' between  $A_{i}A_{j}$ and $A_{k}A_{l}$.
\end{enumerate}
\end{definition}

Other quantities might also be considered, like the distance from $A_{i}$ to
$A_{j}A_{k}$, but in practice these would require several measurements.

It is natural to ask how many simple measurements are needed to determine a
polygon up to isometry. In the case of triangle the answer is $3.$ There are a
couple of different ways to do it, which are very important for the classical
Euclidean Geometry (SSS, SAS, SSA, ASA, AAS). It is appropriate to note that
two sides of a triangle and an angle adjacent to one of the sides do not
always determine the triangle uniquely. But they do determine it locally (as
in part ii of \hyperref[thm:main]{Theorem~\ref*{thm:main}}).

It is easy to see that for any $n-$gon $P=A_{1}\cdot\cdot\cdot A_{n}$ the
following $(2n-3)$ measurements suffice: the $n-1$ distances $\left\vert
{A_{1}A_{2}}\right\vert $, $\left\vert {A_{2}A_{3}}\right\vert $, \dots,
$\left\vert {A_{n-1}A_{n}}\right\vert $, together with the~$n-2$ angles
$\angle A_{1}A_{2}A_{3}$, \dots, $\angle A_{n-2}A_{n-1}A_{n}$. Observe also
that instead of using distances and angles, one can use only distances, by
substituting $\left\vert A_{i-1}A_{i+1}\right\vert $ for $\angle A_{i-1}%
A_{i}A_{i+1}$.

The following theorem implies that for most polygons one can not get away with
fewer measurements. To make this rigorous, we will assume that $A_{1}=\left(
0,0\right)  ,$ and that $A_{2}=\left(  x_{2},0\right)  $ for some $x_{2}>0$.
It then becomes obvious that to each polygon we can associate a point in
$\mathbf{R}^{2n-3}$, corresponding to the undetermined coordinates
$x_{2},x_{3},y_{3},\dots,x_{n},y_{n}$ with $\left(  x_{i},y_{i}\right)
\neq\left(  x_{j} ,y_{j}\right)  $ for $i\neq j$.

\begin{theorem}
\label{th3-1} Denote by $S$ the set of all points in $\mathbf{R}^{2n-3}$
obtained by  the procedure above from those polygons that can be determined up
to  isometry by fewer than $2n-3$ measurements. Then $S$ has measure zero.
\end{theorem}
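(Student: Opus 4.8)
The plan is to parametrize, for each fixed proper subset of fewer than $2n-3$ simple measurements, the set of configurations $(x_2,\dots,y_n)$ that are determined up to isometry by those measurements, and to show each such set has measure zero; since there are only finitely many choices of which measurements to use (the simple measurements of a given $n$-gon come from a finite list indexed by tuples of indices), a finite union of measure-zero sets is measure zero. So it suffices to fix a collection $M = \{m_1,\dots,m_k\}$ of simple measurements with $k \le 2n-4$ and show that the set $S_M$ of configurations determined up to isometry by $M$ has measure zero.

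The key observation is that each simple measurement $m$ is a real-analytic function of the configuration $p = (x_2,x_3,y_3,\dots,x_n,y_n)\in\mathbf{R}^{2n-3}$ (wherever defined, i.e. away from the degenerate loci where the relevant points coincide or a diagonal angle is undefined). A configuration $p$ is \emph{not} determined up to isometry by $M$ precisely when there exists another configuration $q \ne p$ in our normalized coordinate system with $m_i(q)=m_i(p)$ for all $i$; equivalently — since we have used up the isometry freedom by pinning $A_1$ to the origin and $A_2$ to the positive $x$-axis — the fiber of the map $m = (m_1,\dots,m_k)\colon \mathbf{R}^{2n-3}\dashrightarrow\mathbf{R}^k$ through $p$ is not a single point. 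First I would argue that if $Dm(p)$ has rank $2n-3$ at some point, then $m$ is locally injective there by the inverse function theorem, so $p$ is locally determined; but that is not quite what we want, since "locally determined" is weaker than "determined", and moreover $k \le 2n-4 < 2n-3$ means $Dm$ can never have full rank. The correct approach is the other direction: I will show that the set of $p$ that are \emph{not} determined by $M$ is contained in a set where $m$ fails to be "generically finite-to-one" in a suitable sense.

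Here is the cleaner route. Consider the map $\Phi\colon \mathbf{R}^{2n-3}\times\mathbf{R}^{2n-3}\dashrightarrow\mathbf{R}^k\times(\text{something})$, or more simply: the "bad set" $B \subseteq \mathbf{R}^{2n-3}$ consists of those $p$ for which there is $q\ne p$ with $m(q)=m(p)$. Since $k < 2n-3$, the generic fiber of $m$ has dimension at least $2n-3-k \ge 1$, so in fact \emph{every} point lies on a positive-dimensional fiber — meaning with this naive reading $B$ is everything, and the theorem as I've restated it is false! So the statement must be that being determined "by fewer than $2n-3$ measurements" means: there is \emph{some} set of $2n-3$ measurements, of which a proper subset of size $<2n-3$ already suffices — no, re-reading, it genuinely means some set of strictly fewer than $2n-3$ simple measurements determines $P$ up to isometry. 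The resolution: a single measurement can cut down dimension by one only \emph{generically}; at special configurations the measurement's differential may vanish or the measurement may be "degenerate" (e.g. an angle equal to $0$ or $\pi$, a distance realizing a collinearity), and it is only at such special configurations that fewer measurements can conceivably suffice — because the word "determined" is about the \emph{particular} polygon $P$, and $P$ could be rigidified by coincidences. So $S$ is the set of configurations that happen to be rigid despite having too few constraints, and I claim these are exactly configurations lying in a proper real-analytic subvariety.

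Concretely, I would proceed as follows. Fix $M$ with $|M| = k \le 2n-4$. The configurations determined up to isometry by $M$ form a set $S_M$; I will show $S_M$ is contained in the zero set of a not-identically-zero real-analytic function, hence has measure zero, and then take the union over the finitely many $M$. The key lemma is: \emph{if $p$ is a regular point of the map $m = (m_1,\dots,m_k)$ in the sense that $\operatorname{rank} Dm(p) = k$, then the fiber $m^{-1}(m(p))$ is, near $p$, a smooth manifold of dimension $2n-3-k \ge 1$, and in particular contains points other than $p$ arbitrarily close to $p$; so $p\notin S_M$.} Therefore $S_M \subseteq \{p : \operatorname{rank} Dm(p) < k\}$, which is the common zero set of all $k\times k$ minors of the $k\times(2n-3)$ matrix $Dm$. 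These minors are real-analytic functions on the domain of definition of $m$ (a connected open dense subset of $\mathbf{R}^{2n-3}$, the complement of the degenerate loci). The main obstacle — and the step needing genuine work — is to show that for each relevant $M$ \emph{not all} of these $k\times k$ minors vanish identically, i.e. $Dm$ has rank $k$ at some point; equivalently, that the $k$ functions $m_1,\dots,m_k$ are functionally independent on $\mathbf{R}^{2n-3}$. This should follow because one can always choose, among any $k \le 2n-4$ simple measurements, enough "independent directions": for instance, measurements involving disjoint sets of vertices clearly have independent differentials, and a careful combinatorial argument (or an explicit perturbation of a well-chosen base configuration) shows functional independence in general. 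Once that is established, the zero set of the nonvanishing minor is a proper analytic subvariety, hence measure zero (a standard fact: the zero set of a real-analytic function not identically zero on a connected open set has Lebesgue measure zero), and we are done after the finite union — being careful to also throw in the measure-zero degenerate loci themselves. I expect the functional-independence step to be where essentially all the content lies; the rest is soft.
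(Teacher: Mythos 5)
Your reduction to a single fixed collection $M$ of $k\le 2n-4$ measurements, and the containment $S_M\subseteq\{p:\operatorname{rank}Dm(p)<k\}$ via the submersion theorem, are both correct, and the route through real-analyticity of the minors is a legitimate (and more explicit) alternative to the paper's very terse appeal to Sard's theorem. The gap is exactly the step you yourself flag as carrying ``essentially all the content'': the claim that any $k\le 2n-4$ simple measurements are functionally independent, so that some $k\times k$ minor of $Dm$ is not identically zero. This claim is false. For $n\ge 4$, take $M$ to contain the three angles $\angle A_2A_1A_3$, $\angle A_1A_2A_3$, $\angle A_1A_3A_2$ of the triangle $A_1A_2A_3$; their sum is identically $\pi$ on the nondegenerate locus, so $Dm$ has rank at most $k-1$ everywhere and every $k\times k$ minor vanishes identically. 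For such an $M$ your containment reads $S_M\subseteq\mathbf{R}^{2n-3}$ and gives nothing, and no combinatorial care in choosing ``independent directions'' can rescue a statement that is simply not true.

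The repair is to replace $k$ by the generic rank. Set $r=\max_p\operatorname{rank}Dm(p)\le k\le 2n-4<2n-3$. At any $p$ attaining this maximum, the rank is locally constant (it is lower semicontinuous and already maximal there), so the \emph{constant rank theorem} --- not merely the submersion theorem --- shows that the fiber of $m$ through $p$ is locally a manifold of dimension $2n-3-r\ge 1$; hence $p$ is not isolated in its fiber and $p\notin S_M$. Therefore $S_M\subseteq\{p:\operatorname{rank}Dm(p)<r\}$, the common zero set of the $r\times r$ minors, at least one of which is real-analytic and not identically zero on the (connected) domain of $m$ by the very definition of $r$. That zero set is null, and the finite union over the possible $M$ finishes the argument. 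With this one change your proof closes; without it, functionally dependent collections $M$ escape your analysis entirely (for those $M$ one in fact has $S_M=\emptyset$, which is precisely what the generic-rank version delivers).
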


\begin{proof}
Observe that any set
of $2n-3$ specific measurements is a smooth map from $\R^{2n-3}$ to
itself. There is only a finite number of such maps, with
measurements chosen from the the types 1,2 and 3. \ At a non-critical
point, the map has an inverse.  The result is then a consequence of
Sard's theorem (Chapter 2, Theorem 8 of \cite{spivak}).
\end{proof}

The above theorem is clearly not new, and we claim no originality for its
statement or proof. For $n=3$ it implies that for a generic triangle one needs
at least three simple measurements. This is true for any triangle, because two
measurements are clearly not enough. For $n=4$ the theorem implies that a
generic convex polygon requires $5$ measurements. One can be tempted to
believe that no quadrilateral can be described by fewer than $5$ measurements.
In fact, the first reaction of most mathematicians, seems to be that if a
general case requires $5$ measurements, all special cases do so as well. The
following example seems to confirm this intuition.

\begin{example}
Suppose $P=A_{1}A_{2}A_{3}A_{4}$ is a quadrilateral. Suppose $\angle
A_{2}A_{1}A_{4} =\alpha_{1},$ $\angle A_{3}A_{4}A_{1}=\alpha_{2},$
$|A_{1}A_{2}|=d_{1},$  $|A_{3}A_{4}|=d_{2},$ and $\angle A_{1}A_{2}%
A_{3}=\alpha_{3}.$ For most polygons $P$ these  five measurements are
sufficient to determine $P$ up to congruence. However  for some $P$ they are
not sufficient. For example, if $P$ is a square, there  are infinitely many
polygons with the same measurements: all rectangles with  the same
$|A_{1}A_{2}|.$
\end{example}

The above example suggests that the special polygons require at least as many
measurements as the generic ones. So $(2n-3)$ should be the smallest number of
simple measurements required for any $n-$gon. However, despite the above
example, this is very far from the truth: many interesting $n-$gons can be
determined by fewer than $(2n-3)$ simple measurements.

The simplest example in this direction is not usually considered a polygon.
Suppose $A_{1}A_{2}A_{3}A_{4}$ is a set of $4$ points on the plane that lie on
the same line, in the natural order. Then the $4$ distances $|A_{1}A_{4}|,$
$|A_{1}A_{2}|,$ $|A_{2}A_{3}$ $|A_{3}A_{4}|$ determine the configuration up to
the isometry. This means that any subset of four points on the plane with the
same corresponding measurements is congruent to $A_{1}A_{2}A_{3}A_{4}.$ The
proof of this is, of course, a direct application of the triangle inequality:
\[
|A_{1}A_{4}|\leq|A_{1}A_{3}|+|A_{3}A_{4}| \leq|A_{1}A_{2}| +|A_{1}%
A_{2}|+|A_{2}A_{3}|+|A_{3}A_{4}|
\]
One can think of $A_{1}A_{2},$ $A_{2}A_{3},$ $A_{3}A_{4},$ and $A_{1}A_{4}$ as
of rods of fixed length. When $|A_{1}A_{4}|=|A_{1}A_{2}| +|A_{1}A_{2}%
|+|A_{2}A_{3}|+|A_{3}A_{4}|,$ the configuration allows no room for wiggling,
all rods must be lined up. This degenerate example easily generalizes to $n$
points and $n$ distance measurements. More surprisingly, it generalizes to
some convex polygons.

\begin{definition}
A polygon is called \textit{exceptional} if it can be  described locally by
fewer than $2n-3$ measurements.
\end{definition}

Obviously, no triangles are exceptional, in the above sense. But already for
the quadrilaterals, there exist some exceptional ones, that could be defined
by four rather than five measurements.
%
The biggest surprise for us was the following observation.

\begin{proposition}
All squares are exceptional!

Specifically, for a square $ABCD$ the following four measurements distinguish
it among all quadrilaterals:
\[
|AB|, |AC|, |AD|, \angle BCD.
\]

\end{proposition}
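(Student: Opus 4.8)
The plan is to argue synthetically: I would use the right angle at $C$ together with the isosceles condition $|AB|=|AD|$ to force all four vertices onto a common circle, after which the conclusion drops out of the equality case of the triangle inequality. A brute-force coordinate computation is possible but unattractive, for reasons noted below.

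Write $s$ for the side length of the given square, so that the four prescribed values are $|AB|=|AD|=s$, $|AC|=s\sqrt2$, and $\angle BCD=90^\circ$. Let $ABCD$ be an arbitrary quadrilateral realizing these four measurements; I must show it is a square of side $s$. First I would use $\angle BCD=90^\circ$: this places $C$ on the circle with diameter $BD$, so its center $O$, the midpoint of $BD$, satisfies $|OB|=|OC|=|OD|=:r$. Next, $|AB|=|AD|$ puts $A$ on the perpendicular bisector of $BD$, on which $O$ also lies; since $OB$ is perpendicular to that bisector, $|AB|^2=|AO|^2+|OB|^2$. Writing $m=|AO|$, this is the relation $m^2+r^2=s^2$.

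Now I would combine two estimates. The triangle inequality gives $|AC|\le|AO|+|OC|=m+r$, while $(m+r)^2=(m^2+r^2)+2mr\le 2(m^2+r^2)=2s^2$ gives $m+r\le s\sqrt2$. Since $|AC|=s\sqrt2$, both must be equalities: from the first, $O$ lies on the segment $AC$; from the second, $m=r$, hence $m=r=s/\sqrt2$ and $|OA|=|OB|=|OC|=|OD|$. So $A,B,C,D$ lie on one circle centered at $O$, and $O$ is the midpoint of $BD$ (by construction) and of $AC$ (being a point of $[A,C]$ equidistant from its endpoints). Thus both diagonals of $ABCD$ are diameters of this circle, hence bisect each other and are of equal length; a quadrilateral whose diagonals bisect each other is a parallelogram, and equal diagonals then force it to be a rectangle. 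A rectangle $ABCD$ with $|AB|=|AD|$ has all four sides equal, hence is a square, of side $s$ since $|AB|=s$. So $ABCD$ is congruent to the original square.

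I do not expect a serious obstacle; the main thing to be careful about is that the claim is global — over all quadrilaterals, not just those near the square — so I would avoid setting up coordinates and invoking the law of cosines, since that route leads to a transcendental equation in the base angles at $A$ whose relevant solution is an isolated critical point of the constraint, which proves only the local statement and obscures the global one. The synthetic argument sidesteps this entirely. It is also worth remarking that the four prescribed measurements already force $A,B,C,D$ to be distinct (e.g. $|AC|=s\sqrt2\neq 0$ rules out $A=C$), and that $A\neq O$ and $O\neq C$ come out of the equality analysis, so no degenerate configurations need separate treatment.
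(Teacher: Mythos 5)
Your proof is correct, and it takes a genuinely different route from the paper's. The paper fixes $A'$ and observes that $B'$ and $D'$ lie on the circle of radius $d$ about $A'$ while $C'$ lies at distance $d\sqrt2$ from $A'$; from that distance the largest angle the circle can subtend is exactly $\pi/2$ (the tangent--tangent angle), so $\angle B'C'D'=\pi/2$ forces $C'B'$ and $C'D'$ to be the two tangent lines, hence $\angle A'B'C'=\angle A'D'C'=\pi/2$ and the quadrilateral is the square. You instead take the right angle at $C$ as exact (Thales places $C$ on the circle with diameter $BD$) and show that the remaining datum $|AC|=s\sqrt2$ sits at the top of the chain $|AC|\le |AO|+|OC|\le\sqrt{2\left(|AO|^2+|OC|^2\right)}=s\sqrt2$, with the equality analysis yielding a common circumcircle, a rectangle, and then a square. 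In effect the two arguments are dual extremal problems: the paper maximizes the measured angle subject to the three distances, while you maximize the measured diagonal subject to the angle and the two equal sides; both are global over all quadrilaterals, as the statement requires. The paper's version transfers verbatim to the generalization in Proposition~\ref{prop:exceptional_quadrilateral} (quadrilaterals with $\angle ABC=\angle CDA=\pi/2$ but $|AB|\ne|AD|$), where your perpendicular-bisector step is unavailable; your version is assembled from named elementary facts (Thales, the triangle inequality, AM--QM) and makes the concyclicity of the four vertices explicit. Your closing remark that a coordinate/law-of-cosines computation would only give the local statement is exactly the reason the paper, too, argues via the uniqueness of an extremum rather than by linearization.
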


\begin{proof}
Suppose $A'B'C'D'$ is another quadrilateral with $|A'B'|=|AB|,$
$|A'C'|=|AC|,$ $|A'D'|=|AD|,$ and $\angle B'C'D' = \angle BCD$. If $|AB|=d,$
this means that $|A'B'|=|A'D'|=d,$ $|A'C'|=d\sqrt{2},$ and $\angle B'C'D'
=\frac{\pi}{2}.$ We claim that the quadrilateral $A'B'C'D'$ is congruent to
the square $ABCD.$ Indeed, consider $A'$ fixed. Then $B'$ and $D'$ lie on
the circle of radius $d$ centered at $A',$ while $C'$ lies on the circle of
radius $d\sqrt{2}$ centered at $A'.$ Then $\frac{\pi}{2}$ is the maximal
possible value of the angle $B'C'D'$ and it is achieved when $\angle A'B'C'
= \angle A'D'C' =\frac{\pi}{2},$ making $A'B'C'D'$ a square, with the side
$|A'B'|=d.$
\end{proof}

The idea of showing that a particular polygon occurs when some angle or length
is maximized within given constraints, and that there is only one such
maximum, is at the heart of all of the examples in this section. It is related
to the notion of second order rigidity of tensegrity networks, see
\cite{ConWhi1} and \cite{RothWhi}.

One immediate generalization of this construction is the following
$3$-parameter family of exceptional quadrilaterals.

\begin{proposition}
\label{prop:exceptional_quadrilateral}

\label{ex3}Suppose that $ABCD$ is a convex quadrilateral with $\angle ABC =
\angle CDA =\frac{\pi}{2}.$ Then $ABCD$ is exceptional. It is determined up
to congruence by the following four measurements: $\left\vert AB\right\vert ,$
$\left\vert AD\right\vert ,$ $\left\vert AC\right\vert ,$ and $\angle BCD  .$
\end{proposition}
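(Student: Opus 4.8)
The plan is to run the same kind of extremal argument used for the square in the preceding proposition, now with the two legs $|AB|$ and $|AD|$ allowed to differ. Write $b=|AB|$, $e=|AD|$, $f=|AC|$ and $\gamma=\angle BCD$. First I would record what these measurements say about $ABCD$ itself: since $\angle ABC=\pi/2$, the triangle $ABC$ has hypotenuse $AC$, so $b<f$ and $\angle BCA=\arcsin(b/f)$; likewise $e<f$ and $\angle DCA=\arcsin(e/f)$. Because $ABCD$ is convex, the diagonal $AC$ separates $B$ from $D$, so at the vertex $C$ the diagonal ray $CA$ lies between the rays $CB$ and $CD$, giving $\gamma=\angle BCA+\angle DCA=\arcsin(b/f)+\arcsin(e/f)$.

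Next I would set up a competitor. Let $A'B'C'D'$ be any convex quadrilateral with $|A'B'|=b$, $|A'D'|=e$, $|A'C'|=f$ and $\angle B'C'D'=\gamma$. Applying an isometry (this uses up all of the congruence freedom except a reflection), place $A'$ and $C'$ at prescribed points at distance $f$. Then $B'$ lies on the circle $\mathcal{C}_B$ of radius $b$ about $A'$ and $D'$ on the circle $\mathcal{C}_D$ of radius $e$ about $A'$. Convexity of $A'B'C'D'$ (with its fixed cyclic vertex order) forces $B'$ and $D'$ onto opposite sides of the line $A'C'$, so again the diagonal ray $C'A'$ is between $C'B'$ and $C'D'$ and $\angle B'C'D'=\angle B'C'A'+\angle A'C'D'$. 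Now I invoke the elementary fact that, for a point $C'$ outside a circle of radius $r$ centered at $A'$, the angle $\angle A'C'X$ as $X$ ranges over the circle is maximized precisely when the line $C'X$ is tangent — equivalently when $\angle A'XC'=\pi/2$ — and the maximal value is $\arcsin(r/|A'C'|)$; on each side of $A'C'$ the maximizing $X$ is the unique tangent point on that side. Applying this to $\mathcal{C}_B$ and $\mathcal{C}_D$ (legitimate since $b,e<f$) gives $\angle B'C'A'\le\arcsin(b/f)$ and $\angle A'C'D'\le\arcsin(e/f)$, hence $\angle B'C'D'\le\arcsin(b/f)+\arcsin(e/f)=\gamma$, with equality only if $\angle A'B'C'=\angle A'D'C'=\pi/2$.

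Since by hypothesis $\angle B'C'D'=\gamma$, both inequalities must be equalities, so $C'B'$ is tangent to $\mathcal{C}_B$ and $C'D'$ is tangent to $\mathcal{C}_D$, i.e. $\angle A'B'C'=\angle A'D'C'=\pi/2$. From $C'$ there are exactly two tangents to each circle, one on each side of the line $A'C'$; the requirement that $B'$ and $D'$ lie on opposite sides, together with the cyclic order, determines the pair $(B',D')$ up to reflecting both across $A'C'$ — and the two choices are mirror images of one another, each congruent to $ABCD$ (indeed each is forced to have the same two right angles, legs $b,e$, and diagonal $f$ as $ABCD$). Therefore $A'B'C'D'\cong ABCD$; in particular the four measurements determine $ABCD$ among all nearby convex quadrilaterals, so $ABCD$ is exceptional.

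\textbf{Main obstacle.} The extremal step (angle subtended by a circle is maximized at tangency) is standard — provable by the inscribed-angle theorem or a one-variable calculus computation — and should not be the sticking point. The delicate part is the convexity bookkeeping: one must verify that in the competitor $A'B'C'D'$ the diagonal $A'C'$ genuinely separates $B'$ from $D'$ so that the angle at $C'$ splits additively, and that the two tangency solutions at the end are exactly the two mirror images of $ABCD$ and not some spurious non-congruent configuration. This is also the place where it matters that we only claim local determination (among convex quadrilaterals), which is all that is needed for "exceptional".
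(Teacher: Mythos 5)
Your proposal is correct and follows essentially the same route as the paper: fix $A$ and $C$, observe that $B$ and $D$ lie on circles about $A$ and on opposite sides of $AC$ by convexity, and note that $\angle BCD$ is maximized precisely at the tangency configuration, which forces right angles at $B$ and $D$ and hence congruence. Your version simply supplies more detail than the paper (the explicit $\arcsin$ values, the additive splitting of the angle at $C$, and the reflection bookkeeping at the end), all of which is consistent with the paper's argument.
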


\begin{proof}
Consider an arbitrary quadrilateral $ABCD$ with the same four
measurements as the quadrilateral that we are aiming for (note that
we are not assuming that the $\angle ABC = \angle CDA
=\frac{\pi}{2}$).  We can consider the points $A$ and $C$ fixed on
the plane. Then the points $B$ and $D$ lie on two fixed circles
around $A,$ with radius $\left\vert AB\right\vert $ and radius
$\left\vert AD\right\vert $.  Among all such pairs of points on this
circle, on opposite sides of $AC$, the maximum possible value of
$\angle BCD$ is obtained for only one choice of $B$ and $D$.  This
is the choice which makes $CB$ and $CD$ tangent to the circle at $B$
and $D$, and so $\angle ABC$ and $\angle ADC$ are right angles, and
the quadrilateral $ABCD$ is congruent to the one we are aiming for.
This family of quadrilaterals includes all rectangles.  In a sense
it is the biggest possible: one cannot hope for a four-parameter
family requiring just four measurements.  Another such family is
given below.  Note that it includes all rhombi that are not squares.
\begin{proposition}
Suppose $ABCD$ is a convex quadrilateral, and for given $B$ and
$D,$ and given acute angles $\theta_{1},\theta_{2},$ choose $A$
and $C$ so that $\angle DAB=\theta_{1},$ $\angle DCB=\theta_{2}$,
and $\left\vert AC\right\vert $ is as large as possible.  Then
this determines a unique quadrilateral, up to congruence, and this
quadrilateral has the property that $\left\vert AB\right\vert
=\left\vert AD\right\vert $ and $\left\vert CB\right\vert
=\left\vert CD\right\vert .$
\end{proposition}
\end{proof}

We leave the proof to the reader. It implies that for the set of
quadrilaterals $ABCD$ such that $AC$ is a perpendicular bisector of $BD,$ and
the angles $\angle DAB$ and $\angle DCB$ are acute, the measurements
$\left\vert BD\right\vert ,\left\vert AC\right\vert ,$ $\angle DAB$ and
$\angle DCB$ are sufficient to determine $ABCD$.

As David Allwright pointed out to the authors, one can further extend this
example to the situation when $\angle DAB + \angle DCB <\frac{\pi}{2}.$

The existence of so many exceptional quadrilaterals suggested that for bigger
$n$ it may be possible to go well below the $(2n-3)$ measurements. This is
indeed correct, for every $n$ there are polygons that can be defined by just
$n$ measurements.

\begin{proposition}
Suppose that $A_{1}A_{2}\cdot\cdot\cdot A_{n}$ is a convex polygon, with
\[
\angle A_{1}A_{2}A_{3}=\angle A_{1}A_{3}A_{4}=\dots=\angle A_{1}A_{k}A_{k+1}%
=\dots=\angle A_{1}A_{n-1}A_{n}=\frac{\pi}{2}%
\]

(Note that many such polygons exist for every $n$). Then $A_{1}A_{2}\cdot
\cdot\cdot A_{n}$ is exceptional, moreover the distances $|A_{1}A_{2}|,$
$|A_{1}A_{n}|$ and the angles $\angle A_{k}A_{k+1}A_{1},$ $2\leq k\leq{n-1}$
determine the polygon.
\end{proposition}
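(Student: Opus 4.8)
The plan is to generalize the maximization argument used for the right-angled quadrilaterals (\propositionref{prop:exceptional_quadrilateral}) to a chain of right triangles all sharing the vertex~$A_1$. Geometrically, the hypothesis says that the triangles $A_1A_2A_3$, $A_1A_3A_4$, \dots, $A_1A_{n-1}A_n$ are right-angled at $A_3, A_4, \dots, A_{n-1}$ respectively, so they fit together fan-wise around $A_1$, each sharing an edge ($A_1A_{k}$) with the next. The $n$ proposed measurements are $|A_1A_2|$, $|A_1A_n|$, and the $n-2$ angles $\angle A_kA_{k+1}A_1$ for $2 \le k \le n-1$. I would show that among all convex polygons $A_1'\cdots A_n'$ realizing these same $n$ values, the target polygon is the unique one, by exhibiting it as the solution of a constrained maximization problem with a unique maximizer.

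Here is the sequence of steps. First, fix $A_1'$ and think of building the polygon outward: the angle $\angle A_2'A_3'A_1' = \angle A_2A_3A_1$ together with $|A_1'A_2|$ (note $|A_1'A_2'| = |A_1A_2|$ is given) constrains the triangle $A_1'A_2'A_3'$ so that $A_3'$ varies along a locus as the unmeasured length $|A_1'A_3'|$ varies; in a right triangle with fixed hypotenuse-adjacent data the relation is monotonic, so $|A_1'A_3'|$ is a free parameter and, crucially, larger $|A_1'A_2'A_3'|$-triangle means larger $|A_1'A_3'|$ exactly when the angle at $A_3'$ is as large as possible. Iterating: given $|A_1'A_k'|$, the measured angle $\angle A_k'A_{k+1}'A_1'$ together with the constraint forces $|A_1'A_{k+1}'| \le f_k(|A_1'A_k'|)$ for some strictly increasing function $f_k$, with equality precisely when $\angle A_1'A_k'A_{k+1}' = \frac{\pi}{2}$. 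Composing these inequalities from $k=2$ up to $k = n-1$ and using that $|A_1'A_2'|$ is pinned by the first measurement, we get $|A_1'A_n'| \le (f_{n-1}\circ\cdots\circ f_2)(|A_1A_2|)$, with the right-hand side equal to the target value $|A_1A_n|$. Since $|A_1'A_n'| = |A_1A_n|$ is one of our measurements, every inequality in the chain must be an equality, which forces every intermediate angle at $A_k'$ to be a right angle and pins down every $|A_1'A_k'|$; hence the polygon is determined up to congruence.

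The main obstacle I anticipate is making the monotonicity and the "maximum is attained only at the right angle" claims precise and rigorous while respecting convexity and the correct orientation (all the $A_k$ lying on the same side so the fan does not fold back on itself). In particular one must check that the side $A_k'A_{k+1}'$ being tangent-like — i.e.\ that fixing $|A_1'A_k'|$ and the angle at $A_{k+1}'$, the length $|A_1'A_{k+1}'|$ is maximized exactly when $\angle A_1'A_k'A_{k+1}' = \frac{\pi}{2}$ — is the right elementary lemma, and that the $f_k$ are indeed strictly increasing so that the composed inequality is strict unless every factor is tight. I would isolate this as a single planar lemma about two triangles sharing a side and prove it by the same law-of-sines / inscribed-angle reasoning already used implicitly in the quadrilateral case; given that lemma, the induction is routine. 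A secondary point to verify is that the extremal configuration is genuinely convex and has the stated combinatorial type, so that it is a legitimate competitor and the uniqueness statement is not vacuous — but the hypothesis "many such polygons exist for every $n$" together with a small open-ness argument handles the local nature of the claim, matching the definition of \emph{exceptional}.
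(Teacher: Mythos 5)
Your proposal is correct and is essentially the paper's own argument: your functions $f_k$ are exactly the Law of Sines bounds $|A_1A_{k+1}|\le |A_1A_k|/\sin\alpha_k$ (where $\alpha_k=\angle A_kA_{k+1}A_1$), with equality iff $\angle A_1A_kA_{k+1}=\pi/2$, and chaining these and comparing with the measured value of $|A_1A_n|$ forces equality at every step. The only differences are presentational --- you compose strictly increasing functions where the paper writes the explicit product of sines --- plus your (reasonable) extra attention to the convexity/orientation point that the paper leaves implicit.
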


\textbf{Proof.} Suppose $\angle A_{k}A_{k+1}A_{1}=\alpha_{k},$ $2\leq
k\leq(n-1).$ Then because of the Law of Sines for the triangles $\triangle
A_{1}A_{k}A_{k+1},$ we obtain the following sequence of inequalities:
\[
|A_{1}A_{n}|\leq\frac{|A_{1}A_{n-1}|}{\sin\alpha_{n-1}}\leq\frac{|A_{1}%
A_{n-2}|}{\sin\alpha_{n-1}\cdot\sin\alpha_{n-2}}\leq\dots\leq\frac{|A_{1}%
A_{2}|}{\sin\alpha_{n-1}\cdot\dots\cdot\sin\alpha_{2}}%
\]

Equality is achieved if and only if all angles $A_{1}A_{k}A_{k+1}$ are right
angles, which implies the result.

One can ask whether an even smaller number of measurements might work for some
very special polygons. The following theorem shows that in a very strong sense
the answer is negative.

\begin{theorem}
\label{th3-2}For any sequence of distinct points  $A_{1},A_{2},\dots,A_{n}$
on the plane, one needs at least $n$ distance  / angle / diagonal angle
measurements to determine it up to  plane isometry.
\end{theorem}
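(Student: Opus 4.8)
The plan is to prove the statement in the following contrapositive form: any configuration of $n$ distinct points in the plane, equipped with \emph{any} set of at most $n-1$ simple measurements, admits a non‑trivial continuous deformation along which every one of those measurements stays constant. Such a deformation moves the configuration out of its congruence class, so no set of fewer than $n$ simple measurements can determine it, even locally. (We may assume $n\ge 3$.) First I would reduce to exactly $n-1$ measurements — any smaller set can be padded out, and a deformation preserving a larger set preserves a subset — and normalize, putting $A_1$ at the origin and $A_2$ on the positive $x$-axis, so that configurations of $n$ distinct points modulo isometry form an open subset $C\subset\mathbf{R}^{2n-3}$ with coordinates $x_2,x_3,y_3,\dots,x_n,y_n$, and the $n-1$ measurements assemble into a smooth map $\Phi\colon C\to\mathbf{R}^{n-1}$. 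The goal becomes: the given point $p\in C$ is not isolated in its fibre $\Phi^{-1}(\Phi(p))$. Morally this holds because $\dim C = 2n-3 > n-1$, so the fibre ``should'' have dimension $n-2\ge 1$; the real content is to establish this at an arbitrary, possibly highly degenerate, configuration.

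Two special cases I would dispose of at once. If all $n-1$ measurements are distances, they span a graph on $n$ vertices with at most $n-1$ edges, which is either disconnected or a tree; in either case the associated bar framework flexes (a disconnected framework by moving its pieces independently, a tree on $n\ge 3$ vertices because trees are never rigid), supplying the deformation. If all $n-1$ measurements are angles or diagonal angles, they are invariant under homothety, so $t\mapsto\bigl((1+t)A_1,\dots,(1+t)A_n\bigr)$ is the required deformation.

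For the general case I would induct on $n$, the base $n=3$ being a finite check over the possible types of pairs of simple measurements. Given a configuration and at most $n-1$ measurements, I would first look for a point $A_j$ lying on at most one measurement. The level set of a single simple measurement, viewed as a function of $A_j$ alone with the other points fixed, is a $1$‑dimensional curve — a circle (for a distance), an inscribed‑angle arc, or a pair of rays or lines (for an angle or diagonal angle) — so sliding $A_j$ along it, or moving $A_j$ arbitrarily if it lies on no measurement, gives a deformation, and a generic small displacement yields a genuinely non‑congruent configuration. If instead every point lies on at least two measurements, I would delete a carefully chosen point $A_j$: the measurements not involving $A_j$ number at most $(n-1)-2 = n-3\le(n-1)-1$, so by the inductive hypothesis the remaining $n-1$ points carry a non‑trivial deformation preserving all of them, and one then attempts to re‑attach $A_j$ by continuity, via the implicit function theorem, so that the measurements through $A_j$ are restored.

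The hard part is exactly this re‑attachment, compounded by degenerate configurations. When the deleted point lies on precisely two measurements one expects a unique nearby re‑attachment, but only if those two constraints are transverse at $A_j$; transversality can fail at special positions (tangent circles, collinear triples, coincidences of direction), and if $A_j$ lies on three or more measurements the re‑attachment is overdetermined and may have no solution along the inherited deformation. This is a genuine difficulty, not a technicality: the square shows that $n$ well‑chosen simple measurements really can pin a configuration down, so the argument must somewhere exploit that only $n-1$ measurements are present — intuitively, the ``missing'' measurement always leaves some point, or some chain of points, with a free parameter, but locating that parameter uniformly is where the work lies. I expect the resolution to combine a shrewd choice of which point to delete with the two special cases above (scaling when the surviving measurements through the deleted point are all angular, framework‑flexing when they are metric) together with a separate treatment of degenerate sub‑configurations, where angle measurements carry no first‑order information and the count must be redone.
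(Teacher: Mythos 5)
Your overall strategy---induct on $n$, delete a point incident to few measurements, flex the rest, and re-attach---is genuinely different from the paper's, and the re-attachment step that you yourself flag as ``where the work lies'' is a real gap, not a technicality you can expect to patch uniformly. Concretely: a diagonal angle involves four points, so $n-1$ simple measurements can produce up to $4(n-1)$ point--measurement incidences, and for $n\ge 4$ this is enough for \emph{every} point to lie on three or more measurements; your ``carefully chosen'' point with only two incident constraints need not exist, and then re-attachment asks for a solution of three or more equations in the two coordinates of $A_j$ along the inherited deformation, which generically has none. Even in the two-constraint case the required transversality of the two level curves at $A_j$ can fail (tangent circles, degenerate angle loci), and these failures occur precisely at the exceptional configurations the theorem must cover --- the square shows that $n$ well-chosen measurements \emph{do} rigidify some configurations, so any correct proof must extract something quantitative from having only $n-1$, which your outline does not yet do. Your two special cases (all distances, all angles) and the normalization are fine, but they do not combine to close the inductive step.

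The paper avoids this entirely with a single structural observation (its Lemma 3.12): after fixing $A_1A_2$ and working in $\R^{2n-4}$, the level set of \emph{each} simple measurement contains, through every one of its points, an affine subspace $W_i$ of codimension at most $2$ along which the measurement is exactly (not just infinitesimally) constant. Affine subspaces, unlike general submanifolds, satisfy $\mathrm{codim}(W_1\cap\dots\cap W_k)\le\sum\mathrm{codim}\,W_i$ with no transversality hypothesis whatsoever, so with $k\le n-3$ remaining measurements the common affine subspace has dimension at least $2$ and the configuration flexes --- degenerate or not. Only the borderline case $k=n-2$ needs an extra argument, which the paper handles by intersecting the affine subspace of $n-3$ of the measurements with the genuine level set of the last one via the implicit function theorem. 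If you want to salvage your approach, the missing ingredient is exactly such a verification that each measurement constrains the configuration by at most two \emph{linear} conditions; without it, the dimension count at degenerate configurations cannot be controlled.
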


\begin{proof}
At least one distance measurement is needed, and we can assume that
it is $\left\vert A_{1}A_{2}\right\vert $.  We assume
that $A_{1}A_{2}$ is fixed, so the positions of the other $n-2$
points determine the set up to isometry.  We identify the ordered
set of coordinates of these points with a point in
$\R^{2(n-2)}=\R^{2n-4}$.  The set of all sequences of distinct points then
corresponds to an open set $U\subset \R^{2n-4}$.  Each measurement of
type 1, 2, or 3 determines a smooth submanifold $V\subset U$.
The following observation is the main idea of the proof.
\begin{lemma}
\label{lem3-1}Suppose that $x\in V$.  Then there exists an affine subspace
$W$ of $\R^{2n-4},$ of dimension at least $2n-6,$ which contains $x$ and is
such that for some open ball $B$ containing $x$ we have $W\cap B\subset V$.
\end{lemma}
\end{proof}

\begin{proof}
The proof of this lemma involves several different cases, depending on the
kind of measurements used and whether $A_1$ and/or $A_2$  are
involved.  We give some examples, the other cases being similar.
First suppose that the polygon is the unit square, with vertices at
$A_{1}=\left(  0,0\right)  ,$ $A_{2}=\left(  1,0\right)  ,$ $A_{3}=\left(
1,1\right)  ,$ and $A_{4}=\left(  0,1\right)  $   Suppose that the
measurement is the angle $\angle A_2A_1A_3$.  With $A_{1}$
and $A_{2}$ fixed, we are free to move $A_{3}$ along the line $y=x$,  and
$A_{4}$ arbitrarily.  In this case, then, $W$  could be three dimensional,
one more than promised by the Lemma.
Second, again with the unit square, suppose that the measurement is the
distance from $A_{3}$ to $A_{4}$.   In this case, we can move $A_{3}$ and
$A_{4}$ the same distance along parallel lines.  Thus,
\[
W=\left\{  \left(  1+c,1+d,c,1+d\right)  \right\}
\]
which is the desired two-dimensional affine space.
Finally, suppose that $n\geq5$  and that the measurement is the angle between
$A_{1}A_{k}$ and $A_{l}A_{m}$ where $1,2,k,l,m$  are
distinct.  In this case we can move $A_{l}$ arbitrarily,  giving two free
parameters, and we can move $A_{m}$ so that $A_{l}A_{m}$ remains parallel to
the original line containing these points. Then the length $\left\vert
A_{l}A_{m}\right\vert $ can be changed.  This gives a third parameter.  Then
the length $A_{1}A_{k}$ can be changed, giving a fourth, and the remaining
$n-5$ points can be moved, giving $2n-10$  more dimensions.  In this case,
$W$  is $2n-6$ dimensional.  We leave other cases to the reader.
\end{proof}

Continuing the proof of Theorem \ref{th3-2}, we first show that $k\leq n-3$
measurements is insufficient to determine the points $A_{1},\dots,A_{n}$.
Recall that one measurement, $\left\vert A_{1}A_{2}\right\vert $, was already
used. Suppose that a configuration $x\in\mathbf{R}^{2n-4}$ belongs to all the
varieties $V_{1},\dots,V_{k}.$ We will prove that $x$ is not an isolated point
in $V=\cap_{i=1}^{k}V_{i}$.

\begin{proof}
Denote the affine subspaces obtained from Lemma \ref{lem3-1} corresponding to
$V_{i}$ and $x$ by $W_{i}$.  Let $W=W_{1}\cap\cdot\cdot\cdot\cap W_{k}$.
Since each $W_{i}$ has dimension at least $2n-6$, its codimension in
$\R^{2n-4}$ is $ $less than or equal to $2.\footnote{If $V$ is an affine
subspace of $\R^{m}$,  then it is of the form $x+X$,  where $X$ is a subspace
of $\R^{m}.$ If the orthogonal complement of $X$ in $\R^{m}$ is $Y$,  then the
codimension of $V$ is the dimension of $Y$.  }$   Hence%
\[
\text{codim}W\leq\text{codim }W_{1}+\cdot\cdot\cdot+\text{codim }W_{k}%
\leq2k\leq2\left(  n-3\right)  <2n-4,
\]
so $W$ must have dimension at least $2.$ Since $W$ is contained in all of the
varieties $V_1,..,V_k,$ there is a two-dimensional affine subspace of points
containing $x$  and satisfying all of the measurements.  A neighborhood of
$x$ in this subspace is contained in $U$,  showing that $x$ is not isolated.
The case $k=n-2$ is trickier, because the dimensional count does not work in
such a simple way.  In this case, it is possible that $W=W_{1}\cap\cdot\cdot\cdot\cap
W_{k}=\left\{  x\right\}  $.   However, $W^{\prime}=W_{2}\cap\cdot\cdot
\cdot\cap W_{k}$ must have dimension at least $2$.
We now consider the space $W_{1}$ in more detail.  Denote the measurement
defining $V_{1}$ by $f_{1}:\R^{2n-4}\rightarrow \R$.   The linearization of
$f_{1}$ at $x$ is its derivative, $Df_{1}\left(  x\right)  .$  For
measurements of types 1, 2, or 3 above it is not hard to see that
$Df_{1}\left(  x\right)  \neq\mathbf{0}$.   If $X$ is the null space of the
$1\times(2n-4)$ matrix $Df_{1}\left(  x\right)  $,  then the tangent space of
$V_{1}$ at $x$ can be defined as the affine subspace $T_{x}\left(
V_{1}\right)  =x+X$.   This has dimension $2n-5$.
It is possible that $W_{1}=T_{x}\left(  V_{1}\right)  .$ This is the case in
the first example in the proof of the Lemma above.  Then, $\dim W_{1}=2n-5$.   Even if
$\dim\left(  W^{\prime}\right)  =2,$ we must have dim$\left(  W\right)  \geq
1$,  and since $W\cap U\subset V,$ $x$ is not isolated in $V$.
However in the second example in the Lemma above, (a distance measurement for the square),
$\dim W_{1}=2<\dim T_{x}\left(  V_{1}\right)  =3.$   Suppose we add as the
last two measurement the distance from $A_{4}$ to $A_{2}$.  Then one can
easily work out the spaces exactly, and show that $V_{1}\cap W^{\prime}$, when
projected onto the first three coordinates $(x_{3},y_{3},x_{4})$,  is the
intersection of a vertical cylinder with the $x_{3},x_{4}$ plane.  Again, $x$
is not isolated in $V$.
We have left to consider the general case, where $\dim W_{1}=2n-6$.  (We
are assuming that in the lemma we always choose the maximal $W$.)  Since
$\dim T_{x}\left(  V_{1}\right)  =2n-5$ and $\dim W^{\prime}=2$,
$W^{\prime}\cap T_{x}\left(  V_{1}\right)  $  must be one dimensional.
We wish to show that $x$ is not isolated in $V_{1}\cap W^{\prime}.$  We
consider the map $g_{1}=f_{1}|_{W^{\prime}}$.  If $Dg_{1}\neq0$,  then the
implicit function theorem implies that $0$ is not an isolated zero of $g_{1}$
in $W^{\prime}$,  proving the theorem.
To show that $Dg_{1}\neq0$ we can use the chain rule.  Let $i:W^{\prime
}\rightarrow \R^{2n-4}$  be the identity on $W^{\prime}$.  Then $g_{1}%
=f_{1}\circ i$  and so by the chain rule, $Dg_{1}=Df_{1}\circ i.$  If
$Dg_{1}=0,$ then $\left(  Df_{1}\right)  |_{W^{\prime}}=0,$ which implies
that $W^{\prime}\subset T_{x}\left(  V_{1}\right)  .$ Since $\dim W^{\prime
}=2,$ this contradicts the earlier assertion that $W^{\prime}\cap
T_{x}\left(  V_{1}\right)  $ is one dimensional, completing the proof of the
theorem.
\end{proof}

Similar ideas can be used to construct other interesting examples of
exceptional polygons and polyhedra. The following are worth mentioning.

\begin{enumerate}
\item There exist tetrahedra determined by just $5$ measurements,
  instead of the generic $6$. In particular, if measurement of
  dihedral angles is permitted, the regular tetrahedron can be
  determined using $5$ measurements.  It seems unlikely that $4$ would
  ever work.

\item There exist $5$-vertex convex polyhedra that are characterized by just
$5$ measurements. Moreover, four of the vertices are on the same plane, but,
unlike in the beginning of the paper, we do not have to assume this a  priori!
To construct such an example, we start with an exceptional  quadrilateral
$ABCD$ as in Proposition  \hyperref[prop:exceptional_quadrilateral]%
{Proposition~\ref*{prop:exceptional_quadrilateral}}, with $|AD|<|AB|.$ We
then  add a vertex $E$ outside of the plane $ABCD$, so that $\angle ADE=\angle
AEB=\frac{\pi}{2}.$ There are obviously many such polyhedra. Now notice that
the distances $|AD|,$ $|AC|,$ and angles $\angle AED,$ $\angle ABE,$ $\angle
BCD$ completely determine the configuration.

\item As pointed out earlier, using four measurements for a square, one can
determine the cube with just $9$ distance / angle measurements. Interestingly,
one can also use $10$ distance measurements for a cube. If $A_{1}B_{1}%
C_{1}D_{1}$ is its base and $A_{2}B_{2}C_{2}D_{2}$ is a parallel face, then
the six distances between $A_{1},$ $B_{1},$ $D_{1}$ and $A_{2}$ completely fix
the corner. Then $|A_{1}C_{2}|,$ $|B_{2}C_{2}|,$ $|D_{2}C_{2}|,$ and
$|C_{1}C_{2}|$ determine the cube, because
\[
|A_{1}C_{2}|^{2}\leq|B_{2}C_{2}|^{2}+|D_{2}C_{2}|^{2}+|C_{1}C_{2}|^{2},
\]
with the equality only when the segments $B_{2}C_{2},$ $D_{2}C_{2},$ and
$C_{1}C_{2}$ are perpendicular to the corresponding faces of the tetrahedron
$A_{1}B_{1}D_{1}A_{2}$ $\bigskip$ $.$
\end{enumerate}

The last example shows that the problem is not totally trivial even when we
only use the distance measurements. It is natural to ask for the smallest
number of distance measurements needed to fix a non-degenerate $n-$gon in the
plane. We have the following theorem.

\bigskip

\begin{theorem}
Suppose $A_{1},A_{2},\dots,A_{n}$ are points in the plane, with no three of
them on the same line. Then one needs at least $\min(2n-3,\frac{3n}{2})$
distance measurements to determine it up to a plane isometry.
\end{theorem}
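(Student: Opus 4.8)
The plan is to combine a lower-bound argument analogous to the one used in the proof of Theorem~\ref{th3-2} with a counting argument on the ``distance graph'' of the configuration. Here is the setup. As before, we may assume $A_1A_2$ is fixed (one distance measurement used), and identify the remaining data with a point of an open set $U\subset\R^{2n-4}$, where the open condition now also encodes that no three points are collinear. Each distance measurement $|A_iA_j|$ cuts out a smooth hypersurface $V_{ij}\subset U$. The key structural input, replacing Lemma~\ref{lem3-1}, is a sharper statement for \emph{distance} measurements specifically: I would prove that for a distance measurement $|A_iA_j|$ with $\{i,j\}\cap\{1,2\}=\emptyset$, through each point $x$ there is an affine subspace $W_{ij}$ of codimension exactly~$1$ with $W_{ij}\cap B\subset V_{ij}$ (translate $A_i$ and $A_j$ by the same vector, plus move all other free points arbitrarily: that is $2(n-4)+2 = 2n-6$ dimensions, i.e. codimension~$1$); if instead exactly one of $i,j$ lies in $\{1,2\}$, say $j=2$, one gets codimension~$1$ as well by sliding $A_i$ along the circle about $A_2$ through $A_i$ and moving everything else freely ($1 + 2(n-3) = 2n-5$), and if $\{i,j\}=\{1,2\}$ the measurement is redundant. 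So every distance measurement contributes an affine subspace of codimension at most~$1$ (not~$2$ as in the general case). A dimension count then shows that any $k$ distance measurements with $k \le 2n-5$ leave $W=\bigcap W_i$ of dimension $\ge 2n-4-k \ge 1$, so $x$ is not isolated in $V=\bigcap V_i$. This already gives the bound $2n-4$; to squeeze out $2n-3$ one reruns the trickier $k=2n-4$ case of Theorem~\ref{th3-2}'s proof verbatim (drop one measurement, intersect the remaining $W_i$'s to get $W'$ of dimension $\ge 1$, then use the implicit/chain-rule argument on $f_1|_{W'}$). This handles the $2n-3$ side of the minimum.

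For the $\tfrac{3n}{2}$ side, which is the genuinely new content, the plan is a rigidity-style edge-counting argument. View the set $D$ of measured distances (including $|A_1A_2|$) as the edges of a graph $\Gamma$ on vertices $A_1,\dots,A_n$. If $\Gamma$ has fewer than $\tfrac{3n}{2}$ edges, then by the handshake lemma $\Gamma$ has a vertex $A_i$ of degree $\le 2$. If $\deg(A_i) \le 1$, then after fixing everything else, $A_i$ is free to move along a $1$-parameter family (a circle, or the whole plane), and since ``no three collinear'' is an open condition this produces a nontrivial deformation preserving all measurements, so the configuration is not locally determined. If $\deg(A_i)=2$, with $A_i$ joined to $A_j$ and $A_k$, then generically the two circles (about $A_j$ through $A_i$, about $A_k$ through $A_i$) meet transversally in two points, and nothing pins $A_i$ to one of them — but this gives only a \emph{discrete} ambiguity, not a deformation. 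The fix is to observe that we want to defeat \emph{local} determination up to isometry, and a discrete reflection of $A_i$ across the line $A_jA_k$, \emph{while simultaneously} applying a compensating rigid motion, need not help; instead the right move is: delete $A_i$ from $\Gamma$, inductively the remaining $n-1$ points have $<\tfrac{3(n-1)}{2}$... — but that induction does not close cleanly because removing a degree-$2$ vertex removes only $2$ edges.

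So the correct argument for the $\tfrac{3n}{2}$ bound is instead the following direct one. Suppose $|D| < \tfrac{3n}{2}$, so $|D| \le \lceil \tfrac{3n}{2}\rceil - 1$. Consider the ``stress/rigidity'' linearization: the map $F\colon \R^{2n} \to \R^{|D|}$ sending the configuration to its vector of squared measured distances. Its derivative at a configuration $x$ is the rigidity matrix $R(x)$, which has a $3$-dimensional kernel from infinitesimal isometries of the plane (two translations, one rotation) whenever $x$ is not contained in a line — which holds here by the no-three-collinear hypothesis. Thus $\operatorname{rank} R(x) \le 2n - 3$, but more to the point, if $|D| < 2n-3$ we cannot even hope for infinitesimal rigidity; and the $\tfrac{3n}{2}$ threshold is exactly the point below which, by a theorem of the Laman/Maxwell type, \emph{some} vertex or some sub-configuration is underdetermined. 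Concretely: if every vertex had degree $\ge 3$ we'd need $\ge \tfrac{3n}{2}$ edges, so there is a vertex $A_i$ of degree $\le 2$; fixing the other $n-1$ points (which themselves we do \emph{not} claim are determined — we only need one valid placement, namely the given one), the position of $A_i$ has $2$ degrees of freedom constrained by $\le 2$ equations, and I would show the constraint variety through the given $x$ is positive-dimensional \emph{unless} the two constraining circles meet transversally, in which case — and this is the crucial point — we instead pick a different vertex: since $|D| < \tfrac{3n}{2}$ strictly, the degree sequence cannot be ``all $3$'s with one $2$'' exactly when $n$ is such that $\tfrac{3n}{2}$ is met with equality, so there are at least two low-degree vertices, or one vertex of degree $\le 1$, or a vertex of degree $2$ whose neighbours are also low-degree, and in each case a short case analysis produces a genuine $1$-parameter deformation (for a degree-$2$ vertex $A_i$ with a degree-$2$ neighbour $A_j$, move $A_i$ and $A_j$ together along the common $1$-parameter family that rotates the rigid rod $A_iA_j$ about the circle it is constrained to).

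\textbf{Main obstacle.} The delicate point is the degree-$2$ vertex: a single such vertex gives only a two-fold (reflection) ambiguity, which is \emph{discrete} and hence does \emph{not} contradict \emph{local} determination up to isometry. The proof must therefore exploit the \emph{strict} inequality $|D| < \tfrac{3n}{2}$ to guarantee that the graph is ``too sparse'' in a stronger, continuous sense — either via a global count showing a connected component with more vertices than edges (forcing an actual flex), or via a Laman-type combinatorial lemma identifying a subset $U'$ of $j$ vertices spanning fewer than $2j-3$ edges and hence flexible as a sub-framework. I expect the cleanest route is: show that $|D| < \tfrac{3n}{2}$ together with ``no three collinear'' forces either (a) a vertex of degree $\le 1$ (immediate flex), or (b) the edge set, restricted to the vertices of degree $\le 2$, to contain a path or cycle of such vertices long enough to rotate freely; carrying out case (b) rigorously — showing the resulting motion stays inside $U$ (no new collinearities) for small $t$ — is where the real work lies, but it is exactly parallel to the ``rods lined up'' wiggling arguments used earlier in this section, so the techniques are already in hand.
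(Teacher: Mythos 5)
There are genuine gaps on both halves of the minimum. For the $2n-3$ half, your key claim --- that each distance measurement contains, through every point, an affine subspace of codimension $1$ --- is false, and the arithmetic in your own construction already contradicts it: translating $A_i$ and $A_j$ by a common vector while moving the other $n-4$ free points arbitrarily gives a $(2(n-4)+2)=(2n-6)$-dimensional affine subspace of $\mathbf{R}^{2n-4}$, which has codimension $2$, not $1$. Nor can this be repaired: the quadratic form $u\mapsto |u_i-u_j|^2$ vanishes only on the codimension-$2$ subspace $u_i=u_j$, so no codimension-$1$ affine subspace through a given point can lie inside $\{|A_i-A_j|=c\}$; and the circle you slide $A_i$ along in the mixed case is not affine at all. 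With the correct codimension $2$, your intersection count only shows that roughly $n-2$ measurements are insufficient --- i.e., it reproduces the bound of Theorem~\ref{th3-2} and gets nowhere near $2n-3$. For the $\frac{3n}{2}$ half, you correctly isolate the degree-$2$ vertex as the obstacle (it gives only a discrete reflection ambiguity), but none of your proposed escapes closes it: with $|D|=\frac{3n}{2}-1$ the measurement graph can have exactly two \emph{non-adjacent} degree-$2$ vertices, each attached to two degree-$3$ vertices of an otherwise rigid remainder, so no local case analysis at low-degree vertices produces a continuous flex; and the Laman/rigidity-matrix route yields only \emph{infinitesimal} or \emph{generic} flexibility, which does not imply actual flexibility of the given configuration --- the recurring theme of this section (the square, the collinear rods, the octagon tensegrity) is precisely that second-order phenomena can make infinitesimally flexible frameworks rigid.

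The idea you are missing, and the one the paper uses, is a single induction on $n$ that proves both halves of the minimum at once. Suppose a configuration with no three points collinear is fixed by $k<\min(2n-3,\frac{3n}{2})$ distance measurements. Since $2k<3n$, some $A_i$ is used in at most two measurements; if in at most one, the configuration is obviously not fixed. If in exactly two, say $|A_iA_p|$ and $|A_iA_q|$, delete $A_i$ together with these two measurements: the remaining $n-1$ points carry $k-2<\min(2(n-1)-3,\frac{3(n-1)}{2})$ measurements, so by the induction hypothesis they admit a small nontrivial perturbation. Because $A_p$, $A_q$, $A_i$ are not collinear, the circles about $A_p$ and $A_q$ through $A_i$ meet transversally there, so their intersection point persists under the perturbation and $A_i$ can follow it. Hence the full configuration is not fixed either. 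This is how the discrete ambiguity at a degree-$2$ vertex is defeated: not by flexing that vertex, but by flexing everything else and letting the vertex track the flex.
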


\textbf{Proof.} Denote by $k(n)$ the smallest number of distance measurements
that allows us to fix some non-degenerate configuration as above. We need to
show that $k(n)\geq\min(2n-3,\frac{3n}{2}).$ We use induction on $n.$ So
suppose that the statement is true for all sets of fewer than $n$ points, and
that there is a non-degenerate configuration of $n$ points, $A_{1},A_{2}%
,\dots,A_{n}$ that is fixed by $k$ measurements where $k<2n-3$ and
$k<\frac{3n}{2}$. Because $k<\frac{3n}{2}$, there is some point $A_{i},$ which
is being used in less than $3$ measurements. If it is only used in $1$
measurement, the configuration is obviously not fixed. So we can assume that
it is used in two distance measurements, $|A_{i}A_{1}|$ and $|A_{i}A_{2}|.$
Because $A_{1},$ $A_{2},$ and $A_{i}$ are not on the same line, the circle
with radius $|A_{i}A_{1}|$ around $A_{1}$ and the circle with radius
$|A_{i}A_{2}|$ around $A_{2}$ intersect transversally at $A_{i}$. Now remove
$A_{i}$ and these two measurements. By the induction assumption, the remaining
system of $(n-1)$ points admits a small perturbation. This small perturbation
leads to a small perturbation of the original system. Q. E. D.

\bigskip

For $n\leq7$ the above bound coincides with the upper bound $(2n-3)$ for the
minimal number of measurements (see note after Theorem 1). One can show that
for $n\geq8,$ $\frac{3n}{2}$ is the optimal bound, with the regular octagon
being the simplest `distance-exceptional' polygon. In fact there are at least two different ways to define the regular octagon with 12 measurements. 

\begin{example}
Suppose $A_{1}A_{2}A_{3}\dots A_{8}$ is a regular octagon. Its $8$  sides and
the diagonals $|A_{1}A_{5}|,$ $|A_{8}A_{6}|,$  $|A_{2}A_{4}|,$ and, finally,
$|A_{3}A_{7}|$ determine it among all  octagons. A proof is elementary but too messy to
include here, its main idea  is that $|A_{3}A_{7}|$ is the biggest
possible with all other  distances fixed. In fact, one can show that the
distance between the  midpoints of $A_{1}A_{5}$ and $A_{6}A_{8}$ is maximal
when $\angle A_{5}A_{1}A_{8}  = \angle A_{6}A_{5}A_{1} = \frac{3\pi}{8}$.
\end{example}

\begin{example}
Suppose $A_{1}A_{2}A_{3}\dots A_{8}$ is a regular octagon. Its $8$ sides and the $4$ long diagonals determine it among all octagons. This follows from \cite{Connelly82}, Corollary 1 to Theorem 5. In fact, this example can be generalized to any regular $(2n)-$gon: a tensegrity structure with cables for the edges and struts for the long diagonals has a proper stress due to its symmetry and thus is rigid. This provides an optimal bound for the case of even number of points, and one can get the result for the odd number of points just by adding one point at a fixed distance to two vertices of the regular $(2n)-$gon.  We should note that tensegrity networks and their generalizations have been extensively studied, see for example  \cite{ConWhi1}. 
\end{example}

There are many open questions in this area, some of which could be relatively
easy to answer. For example, we do not know whether either of the results for
the cube (9 measurements, or 10 distance measurements) is best possible.
Also, it is relatively easy to determine a regular
hexagon by $7$ measurements, but it is not known if $6$ suffice.

One can also try to extend the general results of this section to the three-dimensional case. Methods of Theorem 3.11 produce a lower estimate of $(2n)$ for the number of distance measurements needed to determine the set of $n$ points, no four of which lie on the same plane. One should also note that many of the exceptional polygons that we have constructed, for instance the square, are unique even when considered in the three-dimensional space: the measurements guarantee their planarity. The question of determining the smallest number of measurements, using distances and angles, with the planarity conditions for the faces, seems to be both the hardest and the  most interesting. But even without the planarity conditions or without the angles the answer is not known.

\section{Acknowledgments}

The authors wish to thank David Allwright for many valuable comments
on previous version of the manuscript.  We also thank Joseph O'Rourke
and the anonymous referee for their exceptionally thorough and
detailed reviews and for pointing out a number of relevant references
to the literature.

\appendix

\section{Quadrilateral-faced hexahedrons with all face-diagonals equal}

\begin{center}
{\large by David Allwright }
\end{center}

In this appendix we classify hexahedrons with quadrilateral faces with all
face-diagonals of length $d$. We shall show that such a hexahedron lies in
either (a) a 1-parameter family with dihedral symmetry of order 6 and all the
faces congruent, or (b) a 2-parameter family with a plane of symmetry and 2
congruent opposite faces joined by 4 symmetric trapezoids. The cube is a
special case of both families and in fact has the maximum volume and the
maximum surface area. We first construct the two families and then show that
the classification is complete.

To construct the first family, let $ABCD$ be a regular tetrahedron with base
$BCD$ in the $xy$-plane, vertex $A$ on the $z$-axis, and edges of length $d$.
Let $l$ be a line parallel to the base and passing through the $z$-axis, and
choose $l$ such that the rotation of the line segment $AB$ through $\pi$ about
$l$ intersects the line segment $CD$. This is a single constraint on a
2-parameter family of lines so there is a 1-parameter family of such lines
$l$. Then let $A^{\prime},B^{\prime},C^{\prime},D^{\prime}$ be the rotations
of $A,B,C,D$ through $\pi$ about $l$. By choice of $l$, the points $A^{\prime
}CB^{\prime}D$ are coplanar and form a convex quadrilateral with both
diagonals of length $d$. But rotation through $\pi$ about $l$ and rotation
through $2\pi/3$ about the $z$-axis together generate a dihedral group of
order 6 permuting these 8 points. Then the images of the quadrilateral
$A^{\prime}CB^{\prime}D$ under that group are the 6 congruent faces of a
hexahedron with all face-diagonals of length $d$.

To construct the second family, let $ABCD$ be a regular tetrahedron with edge
length $d$, and let $P$ be a plane with $AB$ on one side and $CD$ on the
other, and such that the reflection of $AB$ in $P$ intersects $CD$. This is a
single constraint on a 3-parameter family of planes, so there is a 2-parameter
family of such planes $P$. Then let $A^{\prime},B^{\prime},C^{\prime
},D^{\prime}$ be the reflections of $A,B,C,D$ in $P$. By choice of $P$, the
points $A^{\prime}CB^{\prime}D$ form a quadrilateral with both diagonals of
length $d$. Its reflection in $P$ has the same property. The points
$AA^{\prime}C^{\prime}C$ then have $AA^{\prime}$ parallel to $CC^{\prime}$
(both perpendicular to $P$) so they are coplanar and form a symmetric
trapezoid with both diagonals $d$. The same holds for the other faces that cut
$P$, so again we have a hexahedron with the required property.

To complete the classification we now show than \emph{any} hexahedron with all
face-diagonals equal must lie in one of these families. So, suppose we have
such a hexahedron, let $\mathbf{a}_{0}$ be one vertex and let $\mathbf{a}_{1}%
$, $\mathbf{a}_{2}$, $\mathbf{a}_{3}$ be the other ends of the diagonals of
the 3 faces meeting at $\mathbf{a}_{0}$. Then $\mathbf{a}_{1}$, $\mathbf{a}%
_{2}$ and $\mathbf{a}_{3}$ are also face-diagonally opposite one another in
pairs, so $\mathbf{a}_{0}$, $\mathbf{a}_{1}$, $\mathbf{a}_{2}$, $\mathbf{a}%
_{3}$ are the vertices of a regular tetrahedron, $T_{a}$, of edge length $d$
say. In fact, let us choose origin at $\mathbf{a}_{0}$, scaling $d=\sqrt2$ and
orient the coordinate axes so that $\mathbf{a}_{1}=(0,1,1)$, $\mathbf{a}%
_{2}=(1,0,1)$ and $\mathbf{a}_{3}=(1,1,0)$. If we let $\mathbf{b}_{i}$ be the
other vertices of the hexahedron, with $\mathbf{b}_{i}$ diagonally opposite
$\mathbf{a}_{i}$, then the $\mathbf{b}_{i}$ also form a tetrahedron, $T_{b}$,
with edge length $d$, but of the opposite orientation. So in fact we may write
$\mathbf{b}_{i}=\mathbf{b}_{0}-L\mathbf{a}_{i}$ and $L$ will then be a proper
orthogonal matrix. The conditions on the $\mathbf{b}_{0}$ and $L$ now are that
all the faces must be planar, and must not intersect each other except at the
edges where they meet. Each face has vertices $\mathbf{a}_{i},\mathbf{b}%
_{j},\mathbf{a}_{k},\mathbf{b}_{l}$ where $\{i,j,k,l\}$ are some permutation
of $\{0,1,2,3\}$. The condition for face $\mathbf{a}_{0},\mathbf{b}%
_{3},\mathbf{a}_{1},\mathbf{b}_{2}$ to be planar is
\begin{equation}
\label{eq:a1}A_{1}:\qquad0=[\mathbf{a}_{1},\mathbf{b}_{0}-L\mathbf{a}%
_{2},\mathbf{b}_{0}-L\mathbf{a}_{3}] =[\mathbf{a}_{1},L\mathbf{a}%
_{2},L\mathbf{a}_{3}]-[\mathbf{b}_{0},\mathbf{a}_{1},L(\mathbf{a}%
_{2}-\mathbf{a}_{3})].
\end{equation}
Equally, the condition for the face $\mathbf{b}_{0},\mathbf{a}_{2}%
,\mathbf{b}_{1},\mathbf{a}_{3}$ to be planar is
\begin{equation}
\label{eq:b1}B_{1}:\qquad0=[\mathbf{b}_{0}-\mathbf{b}_{1},\mathbf{b}%
_{0}-\mathbf{a}_{2},\mathbf{b}_{0}-\mathbf{a}_{3}] =[L\mathbf{a}%
_{1},\mathbf{a}_{2},\mathbf{a}_{3}]-[\mathbf{b}_{0},L\mathbf{a}_{1}%
,\mathbf{a}_{2}-\mathbf{a}_{3}].
\end{equation}
So the 6 planarity conditions are $A_{1}$, $B_{1}$ and the equations $A_{2}$,
$A_{3}$, $B_{2}$, $B_{3}$ obtained from them by cyclic permutation of the
indices $\{1,2,3\}$. Since we now have 6 inhomogeneous linear equations for
the 3 components of $\mathbf{b}_{0}$, consistency of these equations
constrains $L$. It seems computationally easiest to find what these
constraints are by representing $L$ in terms of a unit quaternion
$q=q_{0}+q_{1}\mathbf{i}+q_{2}\mathbf{j}+q_{3}\mathbf{k}$, so that
$L\mathbf{x}=q\mathbf{x}\overline q$. If $L$ is rotation by $\theta$ about a
unit vector $\mathbf{n}$ then $q=\pm\bigl(\cos({\textstyle{\frac{1}{2}}}%
\theta)+\mathbf{n}\sin({\textstyle{\frac{1}{2}}}\theta)\bigr)$, and we shall
choose the representation with $q_{0}>0$. Then a determinant calculation by
Mathematica shows that consistency of equations $A_{1}$, $A_{2}$, $B_{1}$,
$B_{2}$ gives either $q_{1}=\pm q_{2}$ or some $q_{i}=0$ for $i\ne0$.
Similarly, by cyclic permutation we deduce that either (a) $q_{1}=\pm
q_{2}=\pm q_{3}$, or (b) $q_{1}=0$ or $q_{2}=0$ or $q_{3}=0$. Stating this
geometrically, in case (a) the axis of the rotation $L$ is parallel to one of
the axes of 3-fold rotational symmetry of $T_{a}$; while in case (b) it is
coplanar with one of the pairs of opposite edges of $T_{a}$.

In case (a), if we choose $q_{1}=+q_{2}=+q_{3}$ then the solution is
parametrised by $q_{1}$, $\mathbf{b}_{0}=(b,b,b)$ with $b=(1-4q_{1}%
^{2})/(1-6q_{1}^{2})$, $\mathbf{b}_{1}=\mathbf{b}_{0}-L\mathbf{a}%
_{1}=\mathbf{b}_{0}-\bigl(4q_{1}^{2},(q_{0}-q_{1})^{2},(q_{0}+q_{1}%
)^{2}\bigr)$ and $\mathbf{b}_{2}$, $\mathbf{b}_{3}$ are obtained from
$\mathbf{b}_{1}$ by cyclic permutation of coordinates. This naturally is the
1-parameter family of solutions constructed earlier with 3-fold rotational
symmetry about the $(1,1,1)$ direction. The other choices of $\pm$ signs give
congruent hexahedrons to this family. The solution remains valid for
$-1/\sqrt{12}<q_{1}<+1/\sqrt{12}$: at the ends of this range when $q_{1}%
=\pm1/\sqrt{12}$ the vertex $\mathbf{a}_{i}$ coincides with $\mathbf{b}%
_{i\mp1}$ where those suffices are taken from $\{1,2,3\}\pmod 3$, and for
$|q_{1}|>1/\sqrt{12}$ the polyhedron becomes improper because some pairs of
faces intersect. An example (with $q_{1}=0.2$) is shown in
Figure~\ref{fig:app1a}. \begin{figure}[ptbh]
\centering
\subfigure[]{\includegraphics[width=2.1in]{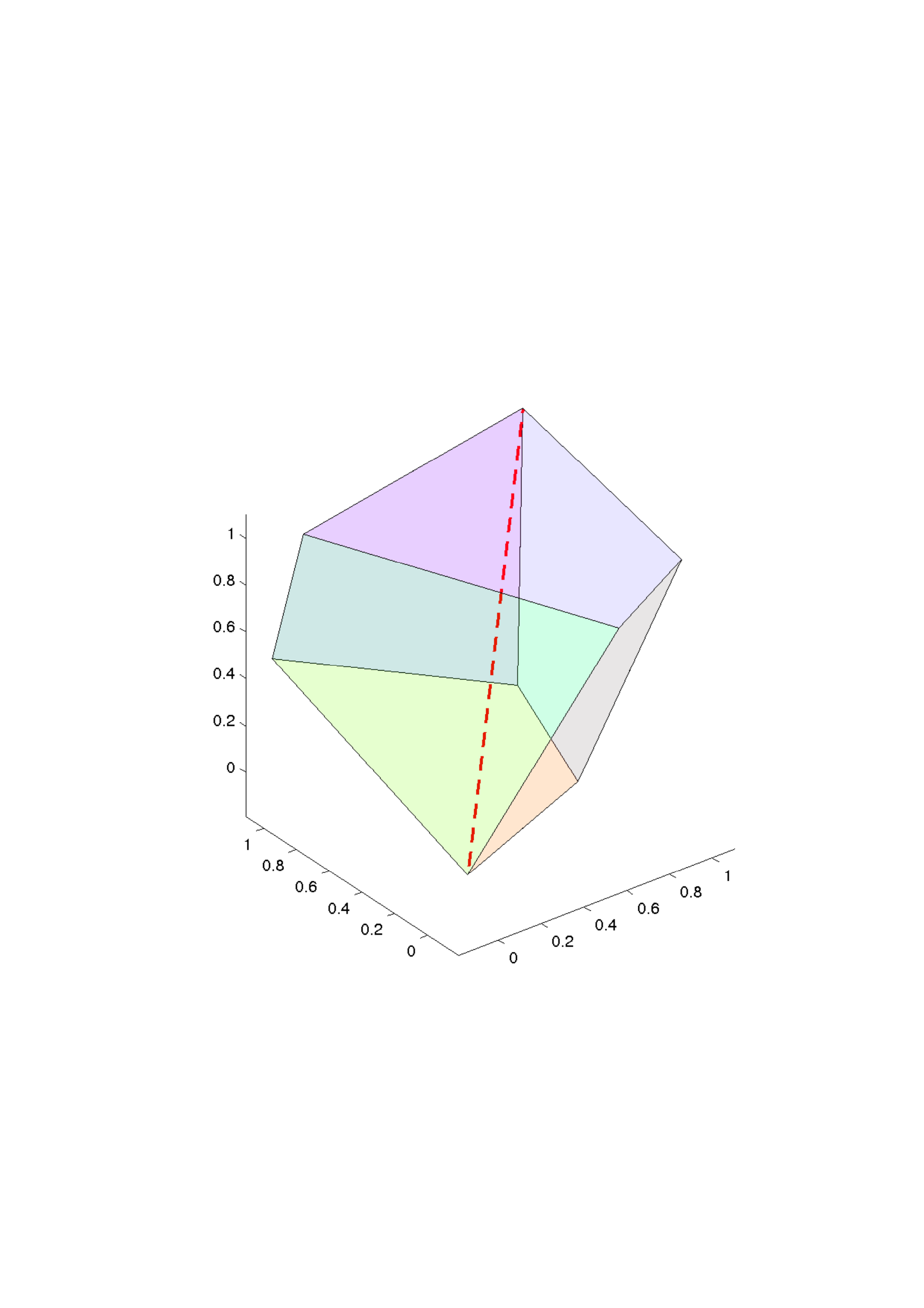}\label{fig:app1a}}
\subfigure[]{\includegraphics[width=2.1in]{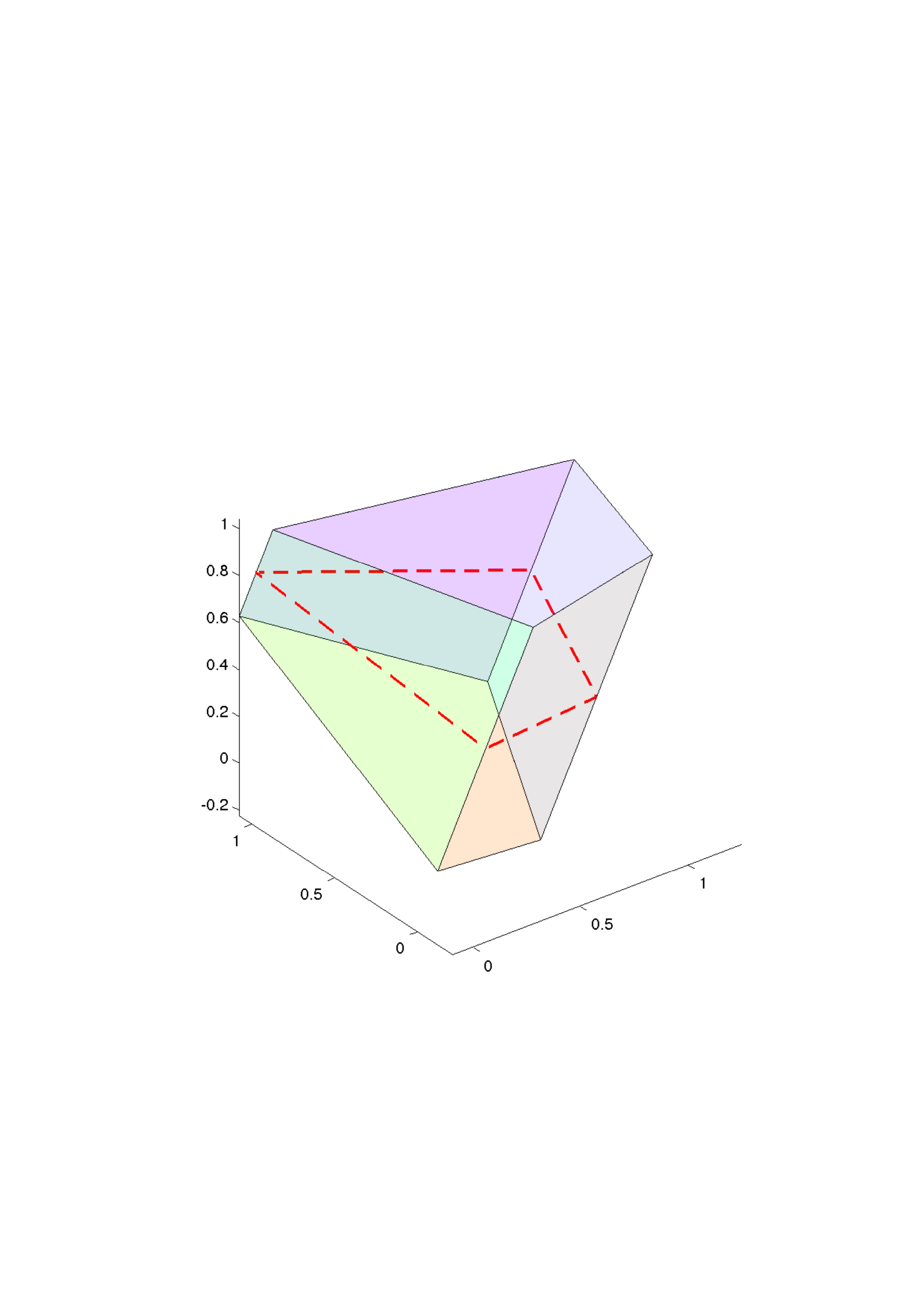}\label{fig:app1b}}
\subfigure[]{\includegraphics[width=2.1in]{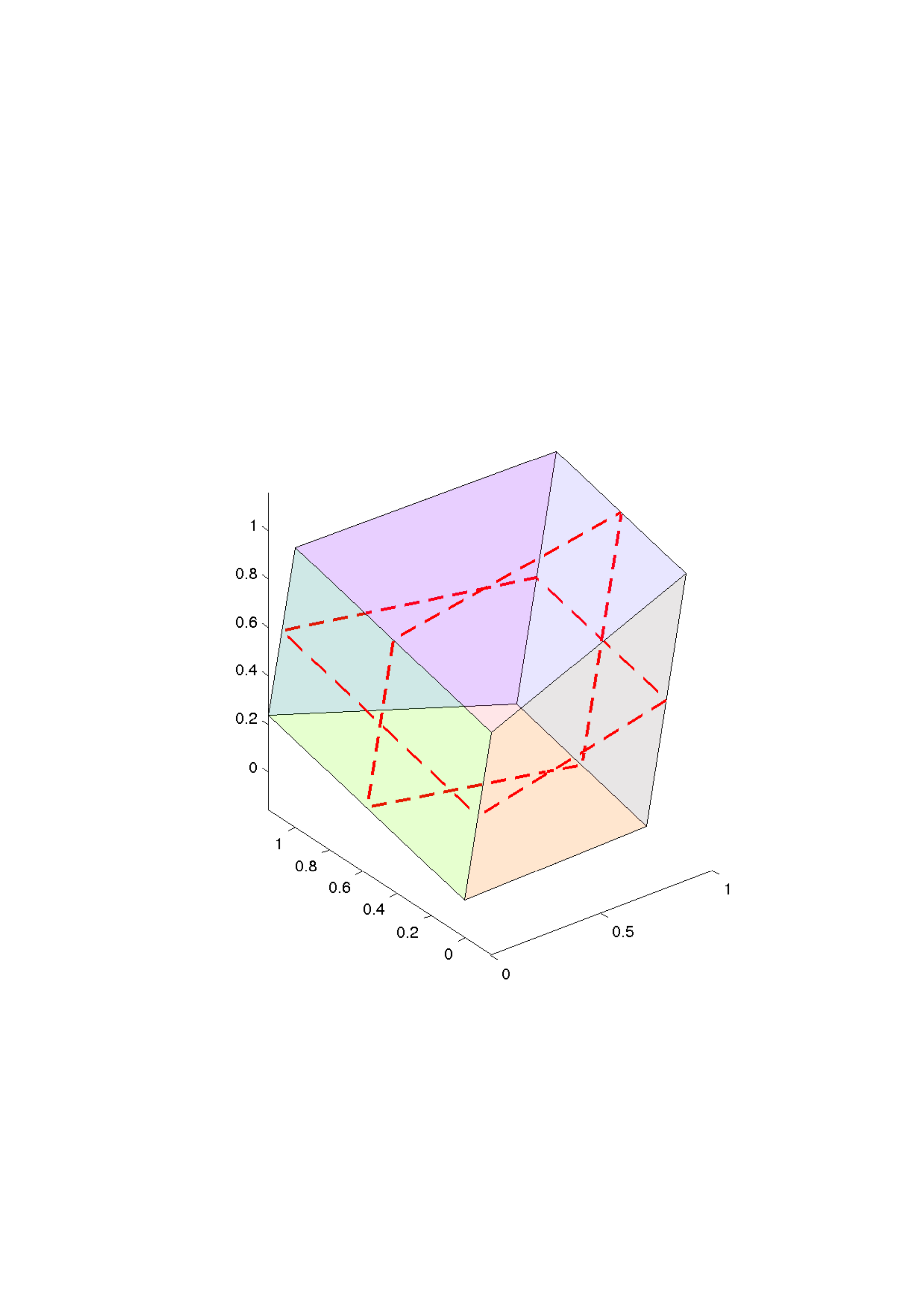}\label{fig:app1c}}  In
case (a) the dashed line is the axis of 3-fold rotational symmetry. In case
(b) the dashed lines mark the plane of symmetry. In case (c) the dashed lines
mark the 2 planes of symmetry. \caption{Hexahedrons with all face-diagonals
equal.}%
\label{fig:app1}%
\end{figure}

In case (b), suppose to be definite we take $q_{3}=0$, so $q_{0}^{2}+q_{1}%
^{2}+q_{2}^{2}=1$. Then the coordinates of $\mathbf{b}_{0}$ are
\begin{equation}
\mathbf{b}_{0}=\left( 1+q_{0}q_{2}+q_{1}q_{2}-q_{2}^{2}+2q_{1}q_{2}^{2}/q_{0},
q_{0}^{2}-q_{0}q_{1}+q_{1}q_{2}+q_{2}^{2}-2q_{1}^{2}q_{2}/q_{0}, q_{0}%
^{2}+q_{0}q_{1}-q_{0}q_{2}+2q_{1}q_{2}\right) .
\end{equation}
This then forms the 2-parameter family constructed earlier, and it can be
checked that the face $\mathbf{a}_{0},\mathbf{b}_{2},\mathbf{a}_{3}%
,\mathbf{b}_{1}$ is congruent to $\mathbf{b}_{0},\mathbf{a}_{1},\mathbf{b}%
_{3},\mathbf{a}_{2}$, and that there is a plane of symmetry $P$ with
$\mathbf{b}_{3-i}$ being the reflection of $\mathbf{a}_{i}$ in $P$. An example
(with $q_{1}=0.2$, $q_{2}=0.25$) is given in Figure~\ref{fig:app1b}, where the
congruent faces are roughly kite-shaped. It remains a valid solution over the
region defined by the inequalities
\begin{equation}
|q_{1}|+|q_{2}|<\frac1{\sqrt2},\qquad\left( 1-(|q_{1}|+|q_{2}|)^{2}\right)
\left( 1-2(|q_{1}|+|q_{2}|)^{2}\right) >2|q_{1}q_{2}|(|q_{1}|+|q_{2}|)^{2}.
\end{equation}
On the edge of this region $P$ passes through a vertex and so two of the faces
fail to be proper quadrilaterals, and outside this region the polyhedron
becomes improper because some pairs of faces intersect.

In case (b) when \emph{two} of the $q_{i}$ vanish, say $q_{2}=q_{3}=0$, then
there are \emph{two} planes of symmetry, and the faces are 2 congruent
rectangles and 4 congruent symmetric trapezoids, illustrated (for $q_{1}=0.2$)
in Figure~\ref{fig:app1c}.

\section{Algorithm for finding a sufficient set of measurements}

In this appendix we give Python code that implements an algorithm to
find a sufficient set of measurements.  The code should be compatible
with versions 2.4 and later of Python.  The code also uses the
\texttt{`numpy'} package, freely available from
\texttt{http://numpy.scipy.org}.

\lstset{language=Python, showspaces=false, showstringspaces=false,
  columns=fullflexible, morekeywords={yield}, basicstyle=\small,
  keywordstyle=\color{blue}, stringstyle=\color{red}, 
  commentstyle={\color{magenta}\itshape}}
\lstinputlisting{polyhedra.py}

\end{document}